\def\3{{\ss}}
\def\2{\frac{1}{2}}
\def\4{\frac{1}{4}}
\def\8{\frac{1}{8}}
\def\x{\times}
\def\.{\cdot}
\def\bq{/\!\!/}
\def\<{\langle}
\def\>{\rangle}
\def\met{\<\; , \: \>}
\def\^{\wedge}
\newcommand{\wt}{\widetilde}
\newcommand{\wh}{\widehat}
\newcommand{\tr}{\ensuremath{\operatorname{tr}}}
\renewcommand{\Im}{\ensuremath{\operatorname{Im}}}
\renewcommand{\sec}{\ensuremath{\operatorname{sec}}}
\def\Im{\mathop{\rm Im\,}\nolimits}
\def\Re{\mathop{\rm Re\,}\nolimits}
\def\Ad{\mathop{\rm Ad}\nolimits}
\def\Span{\mathop{\rm Span\,}\nolimits}
\def\diag{\mathop{\rm diag\,}\nolimits}
\def\H{\mathord{\mathbb H}}
\def\C{\mathord{\mathbb C}}
\def\R{\mathord{\mathbb R}}
\def\Z{\mathord{\mathbb Z}}
\newcommand{\SU}{\operatorname{SU}}
\newcommand{\syp}{\operatorname{Sp}}
\newcommand{\U}{\operatorname{U}}
\def\g{\mathfrak{g}}
\def\p{\mathfrak{p}}
\def\u{\mathfrak{u}}
\def\k{\mathfrak{k}}
\def\m{\mathfrak{m}}
\newcommand{\mf}{\mathfrak}
\def\V{\mathcal{V}}
\newcommand{\mcal}{\mathcal}
\def\lra{\longrightarrow}
\def\lmt{\longmapsto}
\def\hra{\hookrightarrow}
\newcommand{\comment}[1]{}
\def\bsm{\begin{smallmatrix}}
\def\esm{\end{smallmatrix}}
\def\bpm{\begin{pmatrix}}
\def\epm{\end{pmatrix}}
\newcommand{\twoonem}[2]{\mbox{$\bpm
 #1 \\
 #2
\epm$}}
\newcommand{\twom}[4]{\mbox{$\bpm
 #1 & #2 \\
 #3 & #4
\epm$}}
\newcommand{\threem}[9]{\mbox{$\bpm
#1 & #2 & #3 \\
#4 & #5 & #6 \\
#7 & #8 & #9
\epm
$}}
\newcommand{\threemd}[3]{\threem{#1}{}{}{}{#2}{}{}{}{#3}}
\newcommand{\fourm}[4]{\mbox{$
\bpm
#1 &  &  & \\
   & #2 &  &  \\
   &  & #3 &  \\
  &  &  & #4
\epm$}}
\newcommand{\fivem}[5]{\mbox{$\bpm
 #1 &  &  & & \\
  & #2 &  & & \\
  &  & #3 & & \\
  &  & & #4 & \\
  &  & & & #5
\epm$}}
\def\beq{\begin{equation}}
\def\eeq{\end{equation}}
\def\Beq{\begin{equation*}}
\def\Eeq{\end{equation*}}
\newtheorem{thm}{Theorem}[section]
\newtheorem{prop}[thm]{Proposition}
\newtheorem{lem}[thm]{Lemma}
\theoremstyle{plain}
\newtheorem{main}{Theorem}                 
\numberwithin{equation}{section}
\theoremstyle{definition}
\newtheorem*{ack}{Acknowledgments}
\newtheorem{rmk}[thm]{Remark}
\begin{document}
\title{On the curvature of biquotients}
\author{Martin Kerin}
\address{Mathematisches Institut, WWU M\"unster, Einsteinstr. 62, 48149 M\"unster, Germany} 
\email{m.kerin@math.uni-muenster.de}
%
%
\subjclass[2010]{53C20, 53C30, 57R18}

\begin{abstract}
As a means to better understanding manifolds with positive curvature, there has been much recent interest in the study of non-negatively curved manifolds which contain either a point or an open dense set of points at which all $2$-planes have positive curvature.  We study infinite families of biquotients defined by Eschenburg and Bazaikin from this viewpoint, together with torus quotients of $S^3 \x S^3$.
\end{abstract}

\maketitle

\normalsize
\thispagestyle{empty}

There exist many examples of (compact) manifolds with non-negative curvature.  All homogeneous spaces $G/H$ and all biquotients $G \bq U$ inherit non-negative curvature from the bi-invariant metric on $G$.  Additionally, it is shown in \cite{GZ} that all cohomogeneity-one manifolds, namely manifolds admitting an isometric group action with one-dimensional orbit space, admit metrics with non-negative curvature when the singular orbits are of codimension $\leq 2$.

On the other hand, the known examples with positive curvature are very sparse (see \cite{Zi1} for a survey).  Other than the rank-one symmetric spaces there are isolated examples in dimensions $6,7,12,13$ and $24$ due to Wallach \cite{Wa} and Berger \cite{Ber}, and two infinite families, one in dimension $7$ (Eschenburg spaces; see \cite{AW}, \cite{E1}, \cite{E2}) and the other in dimension $13$ (Bazaikin spaces; see \cite{Ba}).  In recent developments, two distinct metrics with positive curvature on a particular cohomogeneity-one manifold have been proposed (\cite{GVZ}, \cite{D}), while in \cite{PW2} the authors propose that the Gromoll-Meyer exotic $7$-sphere admits positive curvature.  This would be the first exotic sphere known to exhibit this property.

Unfortunately, for a simply connected manifold which admits a metric of non-negative curvature there are no known obstructions to admitting positive curvature.

In this paper we are interested in the study of manifolds which lie ``between'' those with non-negative and those with positive sectional curvature.  It is hoped that the study of such manifolds will yield a better understanding of the differences between these two classes.

Recall that a Riemannian manifold $(M, \met)$ is said to have \emph{quasi-positive curvature} (resp. \emph{almost positive curvature}) if $(M, \met)$ has non-negative sectional curvature and there is a point (resp. an open dense set of points) at which all $2$-planes have positive sectional curvature.

\begin{main}
  \label{thmA} \
  \begin{enumerate}
  \item \label{QPcurvEsch}
    All Eschenburg spaces $E^7_{p,q} = \SU(3) \bq S^1_{p,q}$ admit a metric with quasi-positive curvature.
  \item \label{APcurvEsch}
    The Eschenburg space $E^7_{p,q}$, $p=(1,1,0)$, $q=(0,0,2)$, admits almost positive curvature.
  \item \label{QPBaz}
    All Bazaikin spaces $B^{13}_{q_1, \dots, q_5} = \SU(5) \bq (\syp(2) \cdot S^1_{q_1, \dots, q_5})$ such that $0 < q_1, \dots, q_4$ admit quasi-positive curvature.
  \item \label{APBaz}
    The Bazaikin space $B^{13}_{1,1,1,1,-1}$ admits almost positive curvature.
\end{enumerate}
\end{main}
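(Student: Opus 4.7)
My plan is to analyze zero-curvature $2$-planes on $G \bq U$ equipped with the submersion metric induced from the bi-invariant metric on $G = SU(3)$ (for (i), (ii)) or $G = SU(5)$ (for (iii), (iv)), with a Cheeger deformation along a suitable subgroup (e.g.\ $U(2) \subset SU(3)$ or $U(4) \subset SU(5)$) available if additional positivity is needed. By O'Neill's formula, a $2$-plane $\pi_*X \wedge \pi_*Y$ at $[g]\in G\bq U$ has \emph{positive} sectional curvature if and only if the horizontal lifts $X,Y \in \g$ (identifying $T_gG \cong \g$ by right-translation) satisfy $[X,Y] \neq 0$. Thus the task reduces to controlling the set of commuting pairs inside the horizontal distribution $\mcal{H}_g \subset \g$, where $\mcal{H}_g = \V_g^\perp$ and the vertical space is $\V_g = \{A - \Ad(g)B : (A,B) \in \u\}$.

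The uniform strategy for all four parts is: (1) parametrize $\V_g$ and $\mcal{H}_g$ in terms of the entries of $g$ and the defining weights of the $S^1_{p,q}$- (resp.\ $Sp(2)\cdot S^1_{q_1,\dots,q_5}$-) action; (2) expand the condition $[X,Y]=0$ for horizontal $X,Y$ into a polynomial system, decomposed along the Cartan/off-diagonal splitting of $\mathfrak{su}(n)$; (3) for quasi-positive curvature (parts (i) and (iii)), exhibit a single concrete point $g_0 \in G$ — for instance a generic torus element, or a carefully chosen element for which $\Ad(g_0)$ places the second factor of $\u$ transversally to the first — at which the system forces every commuting pair of horizontal vectors to be linearly dependent, uniformly across the family; (4) for almost-positive curvature (parts (ii) and (iv)), show that the locus of ``bad'' points $[g]$, at which a non-degenerate commuting pair of horizontal vectors exists, is contained in a proper closed subvariety of $G\bq U$. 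Openness of the positively curved locus is automatic from continuity of the curvature tensor, so density is the substantive content of (ii) and (iv).

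The principal difficulty is the uniform choice of $g_0$ in (i) and (iii): a single point must witness quasi-positive curvature across an \emph{infinite} family of biquotients, so the algebraic argument cannot depend on the specific values of $(p,q)$ or $(q_1,\dots,q_5)$. In (iii) the hypothesis $q_1,\dots,q_4>0$ is expected to be the crucial ingredient that rules out parallel directions in $\V_g \cap \Ad(g)\,\mathfrak{sp}(2)$ at the chosen $g_0$. In the opposite direction, for (ii) and (iv) the delicate issue is to show that the specific numerical choices $p=(1,1,0)$, $q=(0,0,2)$, and $(q_1,\dots,q_5)=(1,1,1,1,-1)$ are \emph{sufficiently generic} that the zero-curvature locus is nowhere dense — small perturbations of these weights could well enlarge the flat set to an open subset, so the proof must isolate the precise algebraic cancellation occurring at these values. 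I would verify this via an explicit dimension count on the variety cut out by the commutativity equations, using the parametrization developed in Step~2, together with a direct check that a chosen representative point outside this variety indeed carries a positively curved tangent space.
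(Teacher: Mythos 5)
Your overall strategy --- find a point with no horizontal zero-curvature plane for quasi-positive curvature, and show the bad locus is a proper subvariety for almost positive --- matches the paper's plan, and your choice to test at diagonal (torus) elements is exactly what the paper does. However, there is a serious gap at the start: you present the bi-invariant metric on $G$ as the primary setup, with the Cheeger deformation along $U(2)$ (resp.\ $U(4)$) ``available if additional positivity is needed,'' and you assert that a plane in $G\bq U$ is positively curved iff $[X,Y]\neq 0$ for the horizontal lifts. For the pure bi-invariant metric this biconditional is true, but the biquotient then has far too many horizontal abelian planes, and none of the four results holds in that metric. The deformation is essential, not optional; and once you deform, the criterion for a horizontal plane to be flat changes: by Eschenburg's lemma for a symmetric pair $(G,K)$ (Lemma~\ref{Eschlem}), a plane $\Span\{\Phi^{-1}X,\Phi^{-1}Y\}$ is flat for $\langle\cdot,\cdot\rangle_1$ iff $0=[X,Y]=[X_\k,Y_\k]=[X_\p,Y_\p]$, not merely $[X,Y]=0$. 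The two extra brackets are what kill most flat planes. Moreover, for the deformed metric the implication ``horizontal flat $\Rightarrow$ projects to a flat in $G\bq U$'' is no longer automatic from O'Neill; the paper invokes Tapp's theorem (Theorem~\ref{tapp}), applied to the chain $G\times K\to G\to G\bq U$ from a bi-invariant metric, to get the ``only if'' direction of your claimed equivalence. Without this, one cannot conclude that a point at which flat horizontal planes exist actually has a flat plane downstairs.

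Concretely, the missing key idea is Lemma~\ref{lemY1Y3} (and its Bazaikin analogue, Lemma~\ref{Baz0curv}): for the specific rank-one symmetric pairs $(SU(3),U(2))$ and $(SU(5),U(4))$, the three commutator conditions force the flat plane to contain one of two concrete vectors ($Y_3$ or $\Ad_k Y_1$; resp.\ $W_1$ or $W_2$). This collapses the ``polynomial system'' you allude to in Step~2 to two scalar equations in the matrix entries of $g$ ((\ref{Y3horiz})--(\ref{Y1horiz}), (\ref{BazEqn1})--(\ref{BazEqn2})), which is what makes Steps~3--4 tractable. Your proposal as written would face a much larger system and no clear path to uniformity across the infinite family. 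Two smaller gaps: in (i), testing at a diagonal $A$ only works when $(p_1-q_1)(p_2-q_2)>0$ after a permissible relabelling; the lone exception, the Aloff-Wallach space $W_{-1,1}$, has to be handled separately by citing Wilking. And in (ii) the bad locus turns out to be codimension two, cut out by $\det\bigl(\begin{smallmatrix}a_{11}&a_{12}\\a_{21}&a_{22}\end{smallmatrix}\bigr)=0$, which is stronger (and more delicate) than the generic ``proper closed subvariety'' your dimension count would aim for.
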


The Eschenburg spaces are defined by $E^7_{p,q} = \SU(3) \bq S^1_{p,q}$ where $p = (p_1, p_2, p_3)$, $q=(q_1,q_2,q_3) \in \Z^3$, $\sum p_i = \sum q_i$, and $S^1_{p,q}$ acts on $\SU(3)$ via
        $$z \star A = \diag(z^{p_1}, z^{p_2}, z^{p_3}) \cdot A \cdot \diag(\bar z^{q_1}, \bar z^{q_2}, \bar z^{q_3}), \ \ z \in S^1, A \in \SU(3).$$
The Bazaikin spaces are defined by $B^{13}_{q_1, \dots, q_5} = \SU(5) \bq (\syp(2) \cdot S^1_{q_1, \dots, q_5})$, where $q_1, \dots, q_5 \in \Z$, $q = \sum q_i$, and $\syp(2) \cdot S^1_{q_1, \dots, q_5} = (\syp(2) \x S^1_{q_1, \dots, q_5})/\Z_2$ acts on $\SU(5)$ via
        $$[A,z] \star B = \diag(z^{q_1}, \dots, z^{q_5}) \cdot B \cdot \diag(A, \bar z^q),$$
$z \in S^1$, $A \in \syp(2) \subset \SU(4)$, $B \in \SU(5)$.

Several large classes of examples of manifolds with almost positive curvature appear in the work of Wilking \cite{Wi}.  The only other previously known examples of manifolds with almost positive or quasi-positive curvature are given in \cite{PW1}, \cite{W}, \cite{Wi}, \cite{Ta1}, and \cite{EK}.

One of the original motivations for studying manifolds with quasi-positive curvature was the Deformation Conjecture, which stated that a complete Riemannian manifold with quasi-positive curvature admits a metric with positive curvature.  The examples in \cite{Wi} show that this conjecture is false since, for example, $\R P^3 \x \R P^2$ cannot admit positive curvature by Synge's Theorem.  However, all of Wilking's counter-examples have non-trivial fundamental group.  Therefore it is still possible that the Deformation Conjecture holds for simply connected manifolds with quasi-positive curvature.  In particular, if this conjecture were true it would follow from Wilking's examples that $S^3 \x S^2$ admits a metric with positive curvature.  This would be a counter-example to the celebrated Hopf Conjecture, which asserts that a product of spheres cannot admit positive curvature.

In \cite{PW1} the authors suggest that consideration should be given to another modification of the Deformation Conjecture, namely that a Riemannian manifold with quasi-positive curvature admits a metric with almost positive curvature.

A good illustration of the various deformations at work is the Gromoll-Meyer exotic $7$-sphere $\Sigma^7 = \syp(2) \bq \syp(1)$.  In \cite{GM} $\Sigma^7$ was shown to inherit quasi-positive curvature from the bi-invariant metric on $\syp(2)$.  It has since been shown that this metric on $\Sigma^7$ may be deformed to have almost positive curvature (see \cite{W}, \cite{EK}).  Finally, it is claimed in \cite{PW2} that one may further deform $\Sigma^7$ to admit positive curvature.

Given the dearth of examples of manifolds with positive curvature and the relative abundance of examples with non-negative curvature, it is natural to investigate the topology of known examples.  In particular, the Bazaikin spaces may be distinguished by the order $s$ of the cohomology groups $H^6 = H^8 = \Z_s$ (\cite{Ba}).  From this one can write down infinitely many positively curved Bazaikin spaces which are distinct even up to homotopy equivalence.

On the other hand, in \cite{FZ2} it is shown that there are only finitely many positively curved Bazaikin spaces for a given cohomology ring.  This statement should be viewed in the context of the Klingenberg-Sakai conjecture.  It states that there are only finitely many positively curved manifolds in a given homotopy type, and the result \cite{FZ2} raises the question of whether the conjecture is true even for cohomology.  In this context we establish the following result.

\begin{main} \
    \label{topBaz}
            There exist infinitely many pairwise non-homeomorphic Bazaikin spaces which admit quasi-positive curvature and share the same cohomology ring.
\end{main}

From Theorem \ref{topBaz} it is immediate that the Deformation Conjecture for simply connected manifolds and the cohomology Klingenberg-Sakai Conjecture cannot both be true.

If we now relax the constraint that $U$ acts freely on $G$ by allowing $U$ to act almost freely (i.e. all isotropy groups are finite), we can find the following orbifold examples:

\begin{main} \
  \label{thmB}
  \begin{enumerate}
    \item \label{APcurvEsOrb}
      All of the Eschenburg orbifolds $\SU(3) \bq S^1_{p,q}$ with $p = (p_1, p_2, p_3)$ and $q = (q_1, q_2, q_3) \in \Z^3$ satisfying
      \Beq
      \tag{$\dagger$}
      \label{cond1}
      q_1 < q_2 = p_1 < p_2 \leq p_3 < q_3
      \Eeq
      admit almost positive curvature.
    \item \label{APorbs}
      There are infinitely many orbifolds of the form $(S^3 \x S^3) \bq T^2$ admitting almost positive curvature.
  \end{enumerate}
\end{main}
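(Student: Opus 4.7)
The argument adapts the Cheeger-deformation and Eschenburg-perturbation framework used to establish Theorem~\ref{thmA} to the almost-free setting. For part~(\ref{APcurvEsOrb}), equip $G = SU(3)$ with a Cheeger deformation of the bi-invariant metric along a proper subgroup $H \subset G$, for instance the block $SU(2) \subset SU(3)$ or a maximal torus. The induced metric on the orbifold $SU(3) \bq S^1_{p,q}$ has non-negative sectional curvature, and by the O'Neill formula a $2$-plane in the base has zero sectional curvature if and only if it lifts to a $2$-plane in $G$ spanned by horizontal vectors whose left-translates in $\mathfrak{su}(3)$ satisfy the Cheeger-refined commutation condition.

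At $A \in SU(3)$ the fundamental field of the $S^1_{p,q}$-action is
\[
K^*_A \;=\; \diag(ip_1, ip_2, ip_3)\,A \;-\; A\,\diag(iq_1, iq_2, iq_3),
\]
and horizontality of $X \in T_A G$ translates to orthogonality of $A^{-1}X \in \mathfrak{su}(3)$ to $\Ad(A^{-1})\diag(ip_1, ip_2, ip_3) - \diag(iq_1, iq_2, iq_3)$. The main task is then to identify those pairs of commuting horizontal vectors which span a genuine $2$-plane. The constraints in $(\dagger)$---in particular the coincidence $q_2 = p_1$ together with the strict separations $q_1 < q_2$, $p_1 < p_2$, and $p_3 < q_3$ among the remaining eigenvalues---are precisely those required to force every such commuting pair to be linearly dependent at generic $A$. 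Concretely, I would exhibit an explicit $A_0 \in SU(3)$, most likely a non-diagonal unitary adapted to the splitting of indices suggested by $(\dagger)$, at which every horizontal $2$-plane is positively curved via a direct computation with the commutation relations of $\mathfrak{su}(3)$. Openness of positive curvature then gives a whole neighbourhood of $[A_0]$, and equivariance under the residual left/right torus symmetries propagates this to an open dense subset of the orbifold.

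For part~(\ref{APorbs}), take the bi-invariant metric on $S^3 \x S^3$. Each integer pair $(a, b) \in \Z^2 \x \Z^2$ defining an injective homomorphism $T^2 \to T^2_L \x T^2_R \subset S^3 \x S^3$ produces a four-dimensional orbifold of non-negative curvature, and the horizontal commuting pairs pull back to elements of $\mathfrak{su}(2) \oplus \mathfrak{su}(2)$. Whether such a pair spans a genuine $2$-plane reduces to an open linear condition on $(a, b)$, which is satisfied by infinitely many choices. A Cheeger deformation along a maximal torus of $S^3 \x S^3$ then confines the residual zero-curvature locus to a proper subvariety, yielding infinitely many pairwise non-isometric orbifolds with almost positive curvature.

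The principal technical obstacle is the case-by-case analysis reducing the commuting-horizontal-pair condition to the explicit inequalities in $(\dagger)$, together with verifying that these inequalities guarantee the $S^1_{p,q}$-action on $SU(3)$ is almost free (rather than free) so that the quotient carries a genuine orbifold structure. Once a single positively curved point is produced, openness of the positive-curvature locus is automatic, and density then follows from the equivariance of the construction.
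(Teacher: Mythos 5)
Your plan misidentifies the subgroup along which the Cheeger deformation is performed, but the deeper problem is the last step of your argument. For part (i) the paper deforms along $K = U(2)\subset SU(3)$, because $(SU(3),U(2))$ is a rank-one symmetric pair and hence Lemma~\ref{Eschlem} characterises zero-curvature planes by the three commutation conditions $[X,Y]=[X_\k,Y_\k]=[X_\p,Y_\p]=0$; neither $SU(2)$ nor a maximal torus is a symmetric subgroup of $SU(3)$, so the corresponding deformation would not give you the clean zero-curvature criterion (Lemma~\ref{lemY1Y3}: $Y_3\in\sigma$ or $\Ad_k Y_1\in\sigma$). Similarly for part (ii) the paper deforms along $K=\Delta S^3$, again a symmetric pair, not a maximal torus; and it even points out that admitting arbitrary right-torus actions \emph{creates} a zero-curvature plane at every point, so the choice of subgroup is not cosmetic.

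The decisive gap is your closing claim that ``once a single positively curved point is produced, openness of the positive-curvature locus is automatic, and density then follows from the equivariance of the construction.'' Openness gives you an open set, which is quasi-positive curvature. It does not give density: the residual left/right torus symmetries have orbits of dimension at most two, far less than the dimension of the quotient, so sweeping an open set by them cannot make it dense. Indeed, the assertion ``quasi-positive $\Rightarrow$ almost positive'' is exactly the open modification of the Deformation Conjecture from \cite{PW1} that the introduction discusses; it is not a theorem one may invoke. What the paper actually does is solve the horizontality equations (\ref{Y3horiz}) and (\ref{Y1horiz}) explicitly: under $(\dagger)$ one finds (\ref{Y3horiz}) has no solution, and (\ref{Y1horiz}) forces $|k_{11}|^2=0$, $|k_{21}|^2=1$, which in turn forces $a_{22}=a_{32}=0$, so the zero-curvature locus is a concrete $S^1_{p,q}$-invariant subvariety of $SU(3)$ of measure zero. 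The analogous explicit hypersurface computation appears in Theorem~\ref{almostposcurv} for $(S^3\x S^3)\bq T^2$. Without such a direct characterisation of the zero-curvature set, your argument does not reach the conclusion.
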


We remark that there are no free $S^1_{p,q}$-actions on $\SU(3)$ satisfying condition (\ref{cond1}).  Moreover, for the $T^2$-actions on $S^3 \x S^3$ we consider, the proof that $(S^3 \x S^3) \bq T^2$ admits almost positive curvature breaks down precisely when the action is allowed to be free, namely for the quotient manifold $S^2 \x S^2$.

Among the orbifolds  $(S^3 \x S^3) \bq T^2$ there are examples with only one singular point, having isotropy group $\Z_3$, and examples with only two singular points, each with $\Z_2$ isotropy.  Some of these examples are described in Table \ref{specialisotropy}.  In \cite{FZ1} the authors get precisely the same minimal isotropy groups for positively curved six-dimensional orbifolds arising as quotients of Eschenburg spaces by an $S^1$ action.  This raises the question of whether there are positively or almost positively curved orbifolds having a unique singular point, with $\Z_2$ as its isotropy group.

The paper is organised as follows.  In Section \ref{Biqs} we recall the basic notation and techniques which will be used throughout the paper.  In Section \ref{Esch} we apply these techniques to the Eschenburg spaces in order to prove Theorem \ref{thmA}, \ref{QPcurvEsch} and \ref{APcurvEsch}, and Theorem \ref{thmB}\ref{APcurvEsOrb}.  In Section \ref{Baz} we examine the Bazaikin spaces and prove Theorem \ref{thmA}, \ref{QPBaz} and \ref{APBaz}, together with Theorem \ref{topBaz}.  Finally, in Section \ref{4orbs} we turn our attention to torus quotients of $S^3 \x S^3$ and establish Theorem \ref{thmB}\ref{APorbs}.

\section{Biquotient actions and metrics}
\label{Biqs}

In his Habilitation, \cite{E1}, Eschenburg studied biquotients in great detail.  The following section provides a review of the material in \cite{E1} which establishes the basic language, notation and results that will be used throughout the remainder of the paper.

Let $G$ be a compact Lie group, $U \subset G \x G$ a closed subgroup, and let $U$ act on $G$ via
        $$(u_1, u_2) \star g = u_1 g u_2^{-1}, \ \  g \in G, (u_1, u_2) \in U.$$
The action is free if and only if, for all non-trivial $(u_1, u_2) \in U$, $u_1$ is never conjugate to $u_2$ in $G$.  The resulting manifold is called a {\it biquotient}.

Let $K \subset G$ be a closed subgroup, $\met$ be a left-invariant, right $K$-invariant metric on $G$, and $U \subset G \x K \subset G \x G$ act freely on $G$ as above. Let $g \in G$.  Define
        \begin{align*}
            U^g_L &:= \{(g u_1 g^{-1}, u_2) \ | \ (u_1, u_2) \in U \},\\
            U^g_R &:= \{(u_1, g u_2 g^{-1}) \ | \ (u_1, u_2) \in U \},\ \ {\rm and}\\
            \wh U &:= \{(u_2, u_1) \ | \ (u_1, u_2) \in U \}.
        \end{align*}
Then $U^g_L, U^g_R$ and $\wh U$ act freely on $G$, and $G \bq U$ is isometric to $G \bq U^g_L$, diffeomorphic to $G \bq U^g_R$ (isometric if $g \in K$), and diffeomorphic to $G \bq \wh U$ (isometric if $U \subset K \x K$).

In the case of $U^g_L$ this follows from the fact that left-translation $L_g : G \lra G$ is an isometry which satisfies $g u_1 g^{-1}( L_g g') u_2^{-1} = L_g (u_1 g' u_2^{-1})$.  Therefore $L_g$ induces an isometry of the orbit spaces $G\bq U$ and $G\bq U^g_L$.  Similarly we find that $R_{g^{-1}}$ induces a diffeomorphism between $G\bq U$ and $G \bq U^g_R$, which is an isometry if $g \in K$.

Consider now $\wh U$.  The actions of $U$ and $\wh U$ are equivariant under the diffeomorphism $\tau : G \lra G$, $\tau(g) := g^{-1}$.  That is, $u_1 \tau(g) u_2^{-1} = \tau(u_2 g u_1^{-1})$.  Notice that this is an isometry only if $U \subset K \x K$.  In general $G\bq U$ and $G\bq \wh U$ are therefore diffeomorphic but not isometric.

Suppose $\pi: M^n \lra N^{n-k}$ is a Riemannian submersion.  The O'Neill formula for Riemannian submersions implies that $\pi$ is curvature non-decreasing.  Therefore if $\sec_M \geq 0$ then $\sec_N \geq 0$, and zero-curvature planes on $N$ lift to horizontal zero-curvature planes on $M$.  In general, because of the Lie bracket term in the O'Neill formula, the converse is not true, namely horizontal zero-curvature planes in $M$ cannot be expected to project to zero-curvature planes on $N$.  However, we will see at the end of this section that in many situations we have $\sec_N (X, Y) = 0$ if and only if $\sec_M (\wt X, \wt Y) = 0$, where $\wt X$ denotes the horizontal lift to $T_p M$ of $X \in T_{\pi(p)} N$.

Let $K \subset G$ be Lie groups, $\k \subset \g $ the corresponding Lie algebras, and $\met_0$ a bi-invariant metric on $G$.  Note that $(G, \met_0)$ has $\sec \geq 0$, and $\sigma = \Span \{X,Y\}$ has $\sec(\sigma) = 0$ if and only if $[X,Y] = 0$.  We can write $\g = \k \oplus \p$ with respect to $\met_0$.  Given $X \in \g$ we will always use $X_\k$ and $X_\p$ to denote the $\k$ and $\p$ components of $X$ respectively.

Recall that
        $$G \cong (G \x K)/ \Delta K$$
via $(g,k) \lmt g k^{-1}$, where $\Delta K$ acts diagonally on the right of $G \x K$.  Thus we may define a new left-invariant, right $K$-invariant metric $\met_1$ (with $\sec \geq 0$) on $G$ via the Riemannian submersion
        \begin{align*}
            (G \x K, \met_0 \oplus t \met_0 |_\k) &\lra (G, \met_1)\\
            (g,k) &\lmt g k^{-1},
        \end{align*}
where $t>0$ and
        \beq
        \label{met1}
            \met_1 = \met_0 |_\p + \lambda \met_0 |_\k, \ \ \lambda = \frac{t}{t+1} \in (0,1).
        \eeq
In particular notice that
        $$\< X, Y \>_1 = \< X, \Phi(Y) \>_0, \ \ \ {\rm where} \ \Phi(Y) = Y_\p + \lambda Y_\k, \ \lambda \in (0,1).$$
It is clear that the metric tensor $\Phi$ is invertible with inverse given by $\Phi^{-1}(Y) = Y_\p + \frac{1}{\lambda} Y_\k$.

In the special case that $(G,K)$ is a symmetric pair we have the following useful lemma.

\begin{lem}[Eschenburg]
\label{Eschlem}
Let $(G,K)$ be a symmetric pair.  Then a plane $\sigma = \Span \{\Phi^{-1}(X),\Phi^{-1}(Y)\}$ has $\sec(\sigma) = 0$ with respect to $\met_1$ if and only if
        $$0 = [X,Y] = [X_\k,Y_\k] = [X_\p,Y_\p].$$
\end{lem}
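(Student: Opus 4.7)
The plan is to recognise $(G,\langle\cdot,\cdot\rangle_1)$ as the base of the Riemannian submersion
\[
\pi:\bigl(G\times K,\ \langle\cdot,\cdot\rangle_0 \oplus t\langle\cdot,\cdot\rangle_0|_\k\bigr)\lra (G,\langle\cdot,\cdot\rangle_1),\qquad (g,k)\longmapsto gk^{-1},
\]
used in (\ref{met1}) to define $\langle\cdot,\cdot\rangle_1$, and to apply O'Neill's formula. Since the total space is a product of bi-invariant metrics it has $\sec\ge 0$, with $\sec=0$ exactly on pairs which commute in $\g\oplus\k$. At the point $(e,e)$ the vertical subspace is $\Delta\k=\{(W,W):W\in\k\}$, and solving for horizontal lifts (with $\lambda=t/(t+1)$) gives
\[
\widetilde{\Phi^{-1}(X)} \;=\; \bigl(X,\,-\tfrac{1}{t}X_\k\bigr) \in \g\oplus\k,
\]
and analogously for $Y$.

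O'Neill's formula then reads
\[
\sec_1\bigl(\Phi^{-1}(X),\Phi^{-1}(Y)\bigr) \;=\; \sec_0\bigl(\widetilde{\Phi^{-1}(X)},\widetilde{\Phi^{-1}(Y)}\bigr) \;+\; \tfrac{3}{4}\,\frac{\bigl|[\widetilde{\Phi^{-1}(X)},\widetilde{\Phi^{-1}(Y)}]^{V}\bigr|^2}{\bigl|\widetilde{\Phi^{-1}(X)}\wedge\widetilde{\Phi^{-1}(Y)}\bigr|^2},
\]
in which both summands are non-negative. Hence $\sec_1(\sigma)=0$ if and only if each summand vanishes. A direct bracket computation yields
\[
\bigl[\widetilde{\Phi^{-1}(X)},\widetilde{\Phi^{-1}(Y)}\bigr] \;=\; \bigl([X,Y],\ \tfrac{1}{t^2}[X_\k,Y_\k]\bigr),
\]
so the first summand vanishes precisely when $[X,Y]=0$ and $[X_\k,Y_\k]=0$.

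It remains to analyse when the vertical component of this bracket vanishes. Decomposing $\bigl([X,Y],\tfrac{1}{t^2}[X_\k,Y_\k]\bigr)$ orthogonally as $(Z,Z)+(A,B)$, with $(Z,Z)\in\Delta\k$ and horizontal complement characterised by $A_\k+tB=0$, leads to
\[
Z \;=\; \frac{1}{t+1}[X,Y]_\k \;+\; \frac{1}{t(t+1)}[X_\k,Y_\k].
\]
Here the \emph{symmetric pair} hypothesis enters decisively: the relations $[\k,\k]\subset\k$, $[\k,\p]\subset\p$, $[\p,\p]\subset\k$ yield $[X,Y]_\k=[X_\k,Y_\k]+[X_\p,Y_\p]$, so modulo the already-established $[X_\k,Y_\k]=0$, the condition $Z=0$ is equivalent to $[X_\p,Y_\p]=0$. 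The converse direction is immediate: if all three brackets in the statement vanish, every summand in O'Neill's formula is zero and $\sec_1(\sigma)=0$.

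The main obstacle I anticipate is the careful bookkeeping of the horizontal/vertical decomposition and the precise use of the symmetric-pair relations in the last step; without them, the cross terms $[X_\k,Y_\p]+[X_\p,Y_\k]$ would persist in $[X,Y]_\k$ and the transparent three-bracket characterisation would collapse into a more intricate equation relating all four $\k$--$\p$ components.
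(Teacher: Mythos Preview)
The paper does not prove this lemma (it is attributed to Eschenburg \cite{E1}), but your argument via the Cheeger submersion $(G\times K,\met_0\oplus t\met_0|_\k)\to(G,\met_1)$ is precisely the construction the paper sets up around (\ref{met1}), and your computations of the horizontal lift $(X,-\tfrac{1}{t}X_\k)$ and of the vertical projection $Z$ are correct. Two brief remarks.

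First, your tacit identification of the O'Neill $A$-tensor with the $\Delta\k$-component of the \emph{Lie-algebra} bracket deserves a word of justification: it is legitimate here because $(G\times K)\to(G\times K)/\Delta K$ is a normal homogeneous submersion from a bi-invariant metric, so at the identity $A_UV=\tfrac12[U,V]_{\Delta\k}$; in a general submersion the vector-field bracket of basic extensions would be required. Alternatively, for the converse direction one may bypass the $A$-tensor entirely and invoke Theorem~\ref{tapp}.

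Second, your separate analysis of $Z$ is logically redundant. Once the first O'Neill summand vanishes you already have $[X,Y]=0$ and $[X_\k,Y_\k]=0$, so the full bracket $([X,Y],\tfrac{1}{t^2}[X_\k,Y_\k])$ in $\g\oplus\k$ is zero and hence $Z=0$ automatically; the third condition $[X_\p,Y_\p]=0$ then follows directly from the symmetric-pair identity $[X,Y]_\k=[X_\k,Y_\k]+[X_\p,Y_\p]$ without passing through the $A$-tensor. This does not affect correctness, only economy.
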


Recall that for a bi-invariant metric we get $\sec(X,Y) = 0$ if and only if $[X,Y]=0$.  For our left-invariant metric $\met_1$ we have two extra conditions which must be satisfied for a plane to have zero-curvature, and hence we may have reduced the number of such planes.

Suppose we have a biquotient $G \bq U$, where $U \subset G \x K \subset G \x G$ and $G$ is equipped with a left-invariant, right $K$-invariant metric constructed as above.  Then $U$ acts by isometries on $G$ and therefore the submersion $G \lra G \bq U$ induces a metric on $G \bq U$ from the metric on $G$.  By our discussion of the O'Neill formula above we know that a zero-curvature plane on $G \bq U$ with respect to the induced metric must lift to a horizontal zero-curvature plane in $G$.

In order to determine what it means for a plane to be horizontal we must first determine the vertical distribution on $G$.  Note that this is independent of the choice of left-invariant metric on $G$.  The fibre through a particular point $g \in G$ is
        $$F_g := \{u_1 g u_2^{-1} \ | \ (u_1, u_2) \in U\}.$$
If $u(t) := \exp(tX)$, where $X = (X_1, X_2) \in \mf{u}$ and $\mf{u}$ is the Lie algebra of $U$, then $u_1(t)\: g \: u_2(t)^{-1}$ is a curve in $F_g$ and
        $$\frac{d}{dt} \: u_1(t) \: g \: u_2(t)^{-1} \; \Big|_{t=0} = (R_g)_* X_1 - (L_g)_* X_2 =: v_g(X)$$
is a typical vertical vector.  The vector field $v(X)$ on $G$ defined in such a way is the Killing vector field associated to $X$.  Since $G$ is equipped with a left-invariant metric we may shift the vertical space $V_g = \{v_g(X) \ | \ X \in \mf{u} \}$ to the identity $e \in G$ by left-translation and get
        $$\mcal{V}_g := (L_{g^{-1}})_* V_g$$
whose elements are of the form
        $$(L_{g^{-1}})_* v_g(X) = \Ad_{g^{-1}} X_1 - X_2.$$
We may therefore define the horizontal subspace at $g \in G$ by
        $$\mcal{H}_g := \mcal{V}_g^\perp.$$
It is important to remark that the horizontal subspace at $g$ depends on the choice of left-invariant metric as it is defined by $\mcal{V}_g^\perp$, where we are taking the orthogonal complement with respect to the metric on $G$.

Suppose $G$ is equipped with a bi-invariant metric.  Eschenburg \cite{E1} provides some sufficient conditions under which a horizontal zero-curvature in $G$ projects to a zero-curvature plane in a biquotient $G \bq U$.  Wilking \cite{Wi} has generalised this to show that, given any biquotient submersion $G \lra G \bq U$, a horizontal zero-curvature plane in $G$ must always project to a zero-curvature plane in $G \bq U$.  Tapp \cite{Ta2} has recently generalised this result even further.

\begin{thm}[Tapp]
\label{tapp}
Suppose $G$ is a compact Lie group equipped with a bi-invariant metric and that $G \lra B$ is a Riemannian submersion.  Then a horizontal zero-curvature plane in $G$ projects to a zero-curvature plane in $B$.
\end{thm}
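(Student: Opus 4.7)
The plan is to apply O'Neill's Riemannian submersion formula to reduce the statement to a question about the O'Neill A-tensor, and then exploit the bi-invariance of the metric on $G$ to produce a totally geodesic flat through $g$ along which horizontality propagates.

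First I would use O'Neill's formula: for horizontal $\tilde X, \tilde Y \in \mcal{H}_g$,
\[
\sec_B(\pi_* \tilde X, \pi_* \tilde Y) \; = \; \sec_G(\tilde X, \tilde Y) + 3|A_{\tilde X}\tilde Y|^2,
\]
where $A$ is the O'Neill tensor and both summands on the right are non-negative. Proving the theorem therefore reduces to showing $A_{\tilde X}\tilde Y = 0$ whenever $\sec_G(\tilde X, \tilde Y) = 0$.

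Next I would use bi-invariance to reformulate the hypothesis. Writing $\bar X = (L_{g^{-1}})_* \tilde X$ and $\bar Y = (L_{g^{-1}})_* \tilde Y$, the curvature formula of a bi-invariant metric says $\sec_G(\tilde X, \tilde Y) = 0$ if and only if $[\bar X, \bar Y] = 0$ in $\g$. The commuting pair spans an abelian subalgebra $\a \subset \g$, and the coset $F := g \cdot \exp(\a) \subset G$ is a totally geodesic flat submanifold with $T_g F = \Span\{\tilde X, \tilde Y\} \subset \mcal{H}_g$.

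The crux of the plan, and the main obstacle, is to show that $F$ is tangent to $\mcal{H}$ at every point. For any $v \in T_g F$ the geodesic $t \mapsto g\exp(t\bar v)$ has horizontal initial velocity and so, by the standard fact that horizontal geodesics in a Riemannian submersion stay horizontal, is horizontal throughout; this already supplies one horizontal direction in $T_h F$ at each $h$ lying on such a line through $g$. To obtain the full 2-plane $T_h F$ as horizontal, one analyses a variation through horizontal geodesics: since $[\bar X, \bar Y] = 0$, the left-invariant extension of $\bar Y$ is a parallel Jacobi field along the horizontal geodesic $\tilde\gamma(t) = g\exp(t\bar X)$. On the other hand, the horizontal lift along $\tilde\gamma$ of the $B$-Jacobi field with initial data $(\pi_*\tilde Y, 0)$ satisfies a modified Jacobi equation involving $|A|^2$; matching these two Jacobi fields, which share the same initial data at $t=0$, forces $A_{\tilde\gamma'(t)}(\cdot)$ to vanish on $T_{\tilde\gamma(t)} F$ for all $t$. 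Iterating the same argument in the $\bar Y$-direction from each such point propagates horizontality across all of $F$.

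Once $F$ is known to be horizontal, $\pi|_F$ is a Riemannian submersion from the flat $F$ onto a totally geodesic flat $2$-submanifold of $B$ through $\pi(g)$, tangent to $\Span\{\pi_*\tilde X, \pi_*\tilde Y\}$, from which $\sec_B(\pi_*\tilde X, \pi_*\tilde Y) = 0$ follows. The delicate point is the Jacobi-field comparison above: in the biquotient setting it reduces to an algebraic identity for the A-tensor, as in Eschenburg and Wilking, but for an arbitrary Riemannian submersion it requires the more careful analysis of horizontal Jacobi fields that is the content of Tapp's refinement.
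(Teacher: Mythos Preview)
The paper does not prove this theorem: it is stated as a result of Tapp with a reference to \cite{Ta2}, and is then used as a black box throughout the paper (e.g.\ in the proof of Theorem~\ref{poscurv} and implicitly in Sections~\ref{Baz} and~\ref{4orbs}). There is therefore no ``paper's own proof'' to compare against.

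That said, your outline is broadly in the spirit of Tapp's actual argument: reduce via O'Neill to the vanishing of the $A$-tensor on the flat $F = g\exp(\a)$, and show $F$ stays horizontal. The point where your sketch is genuinely incomplete is the Jacobi-field comparison. The left-invariant extension of $\bar Y$ is a parallel Jacobi field for the \emph{$G$-Jacobi equation}, whereas the horizontal lift of a $B$-Jacobi field satisfies a \emph{different} second-order ODE (the horizontal Jacobi equation, which involves the $A$-tensor). Two solutions of different ODEs sharing the same initial data need not agree, so ``matching initial data'' alone does not force $A \equiv 0$ along $\tilde\gamma$. You acknowledge this yourself in the final paragraph, effectively deferring the crux back to Tapp; as written, then, this is an outline of where the difficulty lies rather than a proof. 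If you want to complete it, the missing ingredient is an argument (as in \cite{Ta2}) that the specific parallel Jacobi field coming from the flat in a bi-invariant metric has vanishing vertical component for all $t$, which one obtains by differentiating the norm of the vertical part and using the structure equations for $A$ together with $[\bar X,\bar Y]=0$.
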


It follows immediately from the above theorem that if we have a pair of Riemannian submersions $G \lra M \lra B$, where $G$ is equipped with a bi-invariant metric, then a horizontal zero-curvature plane in $M$ must project to a zero-curvature plane in $B$.

Notice that in the metric construction on $G \bq U$ described above we have Riemannian submersions $G \x K \lra G \lra G \bq U$ where $G \x K$ is equipped with a bi-invariant metric.  Therefore in order to find zero-curvature planes in $(G, \met_1) \bq U$ we may concentrate exclusively on the more tractable problem of finding horizontal zero-curvature planes in $G$.

\section{Eschenburg Spaces}
\label{Esch}

Recall that the Eschenburg spaces are defined as $E^7_{p,q} := \SU(3) \bq S^1_{p,q}$, where $p = (p_1, p_2, p_3)$, $q = (q_1, q_2, q_3) \in \Z^3$, $\sum p_i = \sum q_i$, and $S^1_{p,q}$ acts on $\SU(3)$ via
        $$z \star A = \threemd{z^{p_1}}{z^{p_2}}{z^{p_3}} A \threemd{\bar z^{q_1}}{\bar z^{q_2}}{\bar z^{q_3}}, \ \ A \in \SU(3), z \in S^1.$$
The action is free if and only if
        \beq
        \label{freeness}
            (p_1 - q_{\sigma(1)}, p_2 - q_{\sigma(2)}) = 1 \ \  \textrm{for all} \ \ \sigma \in S_3.
        \eeq
Let $K = \U(2) \hookrightarrow G = \SU(3)$ via
        $$A \in \U(2) \lmt \twom{A}{}{}{\alpha} \in \SU(3), \ \  \alpha = \overline{\det(A)}.$$
$(G,K)$ is a rank one symmetric pair.  Let $\met_0$ be the bi-invariant metric on $G$ given by $\<X,Y\>_0 = - \Re \tr (XY)$.  We can write $\mf{su}(3)=\g = \k \oplus \p$ with respect to $\met_0$.  We define a new left-invariant, right $K$-invariant metric $\met_1$ (with $\sec \geq 0$) on $G$ as in (\ref{met1}) and may therefore apply Lemma \ref{Eschlem}.

From Section \ref{Biqs} we know that, for the $S^1_{p,q}$-action, permuting the $p_i$'s and permuting $q_1, q_2$ are isometries, while permuting the $q_i$'s and swapping $p$, $q$ are diffeomorphisms.

Let
        $$Y_1 := i \threemd{-2}{1}{1}, \ \  Y_3 := i \threemd{1}{1}{-2} \in \g = \mf{su}(3).$$
Using Lemma \ref{Eschlem} Eschenburg \cite{E1} showed that in this special case we can easily determine when a plane in $\g$ has zero-curvature.
\begin{lem}[Eschenburg]
\label{lemY1Y3}
$\sigma = \Span \{X,Y\} \subset \mf{su}(3)$ has $\sec(\sigma) = 0$ with respect to $\met_1$ if and only if either $Y_3 \in \sigma$, or $\Ad_k Y_1 \in \sigma$ for some $k \in K$.
\end{lem}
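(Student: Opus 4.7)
The plan is to invoke the preceding Eschenburg Lemma and rephrase its conclusion intrinsically in terms of the basis $\{X,Y\}$ of $\sigma$ itself. Setting $\tilde X = \Phi(X)$, $\tilde Y = \Phi(Y)$ and using that $\Phi$ acts as the identity on $\p$ and by $\lambda$ on $\k$, a short expansion of the three conditions $[\tilde X, \tilde Y] = [\tilde X_\k, \tilde Y_\k] = [\tilde X_\p, \tilde Y_\p] = 0$ shows that $\sec_{\met_1}(\sigma) = 0$ if and only if
\Beq
  [X_\k, Y_\k] = 0, \qquad [X_\p, Y_\p] = 0, \qquad [X_\p, Y_\k] + [X_\k, Y_\p] = 0.
\Eeq

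My first step is to exploit the explicit block form of $\p \subset \mf{su}(3)$. Writing an element of $\p$ as the block anti-diagonal matrix with upper-right column $b \in \C^2$, a direct computation shows that $[X_\p, Y_\p] = 0$ precisely when $b_X$ and $b_Y$ are $\R$-proportional. Hence, after a linear combination within $\sigma$, I may assume $X_\p = 0$, i.e.\ $X \in \k$. Under this normalisation the three conditions collapse into the single requirement that $X$ commute with $Y$ in $\g$.

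I would then split into two cases depending on whether $\sigma$ lies in $\k$. If $\sigma \subset \k$, condition (i) forces $\sigma$ to be a $2$-dimensional abelian subspace of $\k = \u(2)$, hence a Cartan subalgebra. Every maximal torus of $\u(2)$ contains the centre $\R \cdot iI_2$, which maps to $\R \cdot Y_3$ under the embedding $\u(2) \hookrightarrow \mf{su}(3)$, so $Y_3 \in \sigma$. Otherwise $Y$ has a nonzero $\p$-component $B$, and one needs $X \in \k$ to centralise $B$. Using the bracket formula $[X, B] = (0, (\alpha + (\tr\alpha)I) b, 0)$ for $X = \diag(\alpha, -\tr\alpha) \in \k$, this forces $\alpha$ to admit $-\tr\alpha$ as an eigenvalue, equivalently the eigenvalues of $X$ as a matrix in $\mf{su}(3)$ take the form $\{\mu, \mu, -2\mu\}$ up to permutation. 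Depending on whether the distinguished eigenvalue $-2\mu$ sits in position three of $X$ or in position one/two, the $\Ad_K$-action on the upper-left $2 \times 2$ block places $X$ into either $\R \cdot Y_3$ or $\R \cdot \Ad_k Y_1$ for some $k \in K$; thus $Y_3 \in \sigma$ or $\Ad_k Y_1 \in \sigma$ as required.

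The converse directions are obtained by reading this analysis backwards: for any plane of either described form, the compatibility constraints on a second basis vector (commutativity with $Y_3$ forcing $\sigma \subset \k$, or commutativity with $\Ad_k Y_1$ placing $\sigma$ inside its $4$-dimensional centraliser in $\g$) coincide with those produced above, so the three conditions hold. The main technical obstacle is the book-keeping around the position of the repeated eigenvalue under the $\Ad_K$-action: since $K = U(2)$ can freely permute positions one and two but cannot move position three, exactly two orbits of such $X$ arise, corresponding respectively to $\R \cdot Y_3$ and to the two-sphere of $\Ad_K$-translates of the regular element $Y_1 \in \k$.
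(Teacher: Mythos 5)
The paper itself offers no proof of Lemma~\ref{lemY1Y3}; it is attributed directly to Eschenburg~\cite{E1}, so your argument can only be judged on its own terms. Its overall strategy is the natural one given Lemma~\ref{Eschlem}, and most of the steps are sound: the rephrased triple of bracket conditions on the plane $\Span\{X,Y\}$ itself is correct, the identification of $[X_\p,Y_\p]=0$ with $\R$-proportionality of the two $\C^2$-columns is right, the reduction to $X_\p=0$ is legitimate, and the case $\sigma\subset\k$ is handled correctly: any $2$-dimensional abelian subspace of $\u(2)$ is a Cartan subalgebra and hence contains the centre $\R\, iI_2$, whose image under $\alpha\mapsto\diag(\alpha,-\tr\alpha)$ is $\R\,Y_3$.

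There is, however, a genuine error in the case $Y_\p=B\neq 0$. You assert that the requirement that $\alpha$ admit $-\tr\alpha$ as an eigenvalue is \emph{equivalent} to $X$ having eigenvalue multiset $\{\mu,\mu,-2\mu\}$, and then split on whether $-2\mu$ sits in position three. But the first condition is strictly stronger: since $X=\diag(\alpha,-\tr\alpha)$, position three carries $-\tr\alpha$ by construction, so asking $-\tr\alpha$ to also be an eigenvalue of $\alpha$ is exactly asking the position-three eigenvalue to be the \emph{repeated} one. If instead $\alpha=i\mu I_2$ (so that $-2i\mu$ occupies position three), then $\alpha+(\tr\alpha)I=3i\mu I_2$ is injective for $\mu\neq 0$, forcing $B=0$. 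Hence for $B\neq 0$ the distinguished eigenvalue $-2\mu$ \emph{never} lands in position three, and $X$ is always $\Ad_K$-conjugate to a multiple of $Y_1=i\diag(-2,1,1)$ (positions one and two being interchangeable under $K=U(2)$); the $Y_3$ alternative is supplied entirely by the case $\sigma\subset\k$. Your final conclusion is therefore still correct, but the dichotomy ending your second paragraph is vacuous and, as written, contradicts your own converse, where you correctly observe that commuting with $Y_3$ forces $\sigma\subset\k$. The converse discussion itself is otherwise appropriate, matching the sense in which the ``if'' direction is actually invoked in the proof of Theorem~\ref{poscurv}: one must be able to \emph{choose} a complementary commuting vector, not that every plane through $Y_3$ or $\Ad_k Y_1$ is flat.
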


We may apply this lemma in order to discuss when an Eschenburg space $E^7_{p,q}$ admits positive curvature.  While this is well-known, in our proof we compute explicit equations ((\ref{Y3horiz}) and (\ref{Y1horiz})) for the existence of zero-curvature planes in $E^7_{p,q}$, which we use to prove Theorem \ref{thmA}\ref{QPcurvEsch} and \ref{APcurvEsch}.

\begin{thm}[Eschenburg]
\label{poscurv}
$E^7_{p,q} := (\SU(3), \met_1) \bq S^1_{p,q}$ has positive curvature if and only if
        \beq
        \label{poscurvcond}
            q_i \not\in [\underline{p}, \overline{p}] \ \ {\rm for} \; i=1,2,3,
        \eeq
where $\underline{p} := \min \{p_1, p_2, p_3 \}$, $\overline{p} := \max \{p_1, p_2, p_3 \}$.
\end{thm}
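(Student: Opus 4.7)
The plan is to apply Lemma \ref{lemY1Y3} to catalog zero-curvature planes in $(SU(3), \met_1)$, impose horizontality against the vertical distribution of the submersion $SU(3) \to E^7_{p,q}$, and extract the algebraic criterion on $(p,q)$.

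First I would identify the vertical space. Shifted to the identity via left translation, it is spanned by $V_g := \Ad_{g^{-1}} P - Q$ with $P = i\diag(p_1, p_2, p_3)$ and $Q = i\diag(q_1, q_2, q_3)$, so a plane in $\mf{su}(3)$ is horizontal at $g$ iff it lies in $V_g^{\perp_1}$. By Lemma \ref{lemY1Y3} every zero-curvature plane must contain $Y_3$ or $\Ad_k Y_1$ for some $k \in K$; since both vectors lie in $\k$, their $\met_1$-pairings with $V_g$ are $\lambda$ times the bi-invariant pairings, which reduces each horizontality check to a concrete trace computation.

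For the $Y_3$ case, expanding $-\Re\tr(Y_3(\Ad_{g^{-1}}P-Q))=0$ with the row/column unitarity of $g \in SU(3)$ and $\sum p_j = \sum q_j$ collapses to the equation
\[
\sum_{j=1}^3 p_j\,|g_{j3}|^2 \;=\; q_3,
\]
which I label (Y3horiz). Because the third column of $g$ can be chosen to be any unit vector in $\C^3$, the coefficient tuple $(|g_{j3}|^2)_j$ sweeps out the probability simplex, so (Y3horiz) is solvable iff $q_3 \in [\underline p, \overline p]$. For the $\Ad_k Y_1$ case, setting $h := gk$ and using Ad-invariance of $\met_0$, the $P$-contribution becomes a function of $(|h_{j1}|^2)_j$, while the $Q$-contribution depends only on $(c_{11}, c_{21}) := (|A_{11}|^2, |A_{21}|^2)$ where $A$ is the $U(2)$-block of $k$; the horizontality condition assumes the form
\[
\sum_{j=1}^3 p_j\,|h_{j1}|^2 \;=\; c_{11} q_1 + c_{21} q_2,
\]
which I label (Y1horiz). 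The left side sweeps out $[\underline p, \overline p]$ and the right side sweeps out $[\min(q_1,q_2), \max(q_1,q_2)]$, so (Y1horiz) is solvable iff those intervals meet. Here I would also verify that the second direction of the plane is not an obstruction: the centralizer of $\Ad_k Y_1$ in $\g$ is the $4$-dimensional subspace $\Ad_k(\k_{\diag} + \p_{23})$, and $V_g^{\perp_1}$ cuts this down by at most one dimension, so a complementary horizontal direction in the centralizer always exists.

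Finally I would combine. A horizontal zero-curvature plane exists iff (Y3horiz) or (Y1horiz) has a solution. Using that permutations of $(p_1,p_2,p_3)$ and the swap $q_1 \leftrightarrow q_2$ are isometries, while arbitrary permutations of the $q_i$'s are diffeomorphisms producing equivalent metrics on the underlying manifold, positive curvature amounts to finding a relabeling of the $q_i$'s so that $q_3 \notin [\underline p, \overline p]$ and $\{q_1, q_2\}$ lie strictly on a single side of $[\underline p, \overline p]$. A short case check, using that $\sum p_j = \sum q_j$ forbids all three $q_i$'s from lying on one side when $\underline p < \overline p$, shows that such a relabeling exists iff $q_i \notin [\underline p, \overline p]$ for every $i$, which is the stated criterion.

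The step I expect to be the main obstacle is the verification accompanying (Y1horiz): making sure that $\Ad_k Y_1 \perp_1 V_g$ is the only constraint, by building the second direction of the zero-curvature plane inside the centralizer $\Ad_k(\k_{\diag} + \p_{23})$. Once that is clean, the final combinatorial matching of the two polytope conditions to the pointwise criterion is routine.
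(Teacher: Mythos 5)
Your proposal follows the same route as the paper's proof: reduce via Lemma \ref{lemY1Y3} to testing horizontality of $Y_3$ and $\Ad_k Y_1$ against the vertical vector $v_A = \Ad_{A^*} P - Q$, derive the scalar conditions (\ref{Y3horiz}) and (\ref{Y1horiz}), and characterize solvability using unitarity of $A$ and $k$ together with $\sum p_i = \sum q_i$. Your centralizer computation for the complementary horizontal direction is simply a more explicit version of the paper's remark that the $S^1_{p,q}$-orbits are one-dimensional, and you should make explicit (as the paper does via Theorem \ref{tapp}) that a horizontal zero-curvature plane in $(SU(3),\met_1)$ necessarily projects to a zero-curvature plane in $E^7_{p,q}$, since this is what closes the converse direction.
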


\begin{proof}
We will first prove that the condition (\ref{poscurvcond}) gives positive curvature.  By Lemma \ref{lemY1Y3} we need only show that we may choose an ordering on the $q_i$'s so that $Y_3$ and $\Ad_k Y_1$ are never horizontal.

Let $P = i \diag(p_1, p_2, p_3)$ and $Q = i \diag(q_1, q_2, q_3)$.  From our discussion of vertical spaces in Section \ref{Biqs} we find that the vertical subspace at $A = (a_{ij}) \in \SU(3)$ is
        $$\V_A = \left\{t \: v_A \ \Big| \; t \in \R, v_A:=\Ad_{A^*} P - Q \right\},$$
where $A^* = \bar A^t$.  Notice that $Y_3 \in \k$.  Thus $0 = \<v_A, Y_3\>_1$ if and only if $0 = \<v_A, Y_3\>_0$.  Now, since $\<X,Y\>_0 = - \Re \tr (XY)$,
        \beq
        \label{Y3horiz}
            0 = \<v_A, Y_3\>_1 \iff \sum_{j=1}^3 |a_{j3}|^2 p_j = q_3.
        \eeq
Similarly, for $\Ad_k Y_1$, $k \in K$, we find
        \beq
        \label{Y1horiz}
            0 = \<v_A, \Ad_k Y_1\>_1 \iff \sum_{j=1}^3 |(Ak)_{j1}|^2 p_j = |k_{11}|^2 q_1 + |k_{21}|^2 q_2.
        \eeq

Now, since $q_i \not\in [\underline{p}, \overline{p}], i=1,2,3$, and $\sum p_j = \sum q_j$, we know that two of the $q_i$'s must lie on one side of $[\underline{p}, \overline{p}]$, and one on the other.  We reorder and relabel the $q_i$'s so that $q_1, q_2$ lie on the same side of $[\underline{p}, \overline{p}]$.  Since $A$ and $k$ are both unitary we therefore have that there are no solutions to either (\ref{Y3horiz}) or (\ref{Y1horiz}).  Hence $E^7_{p,q}$ has positive curvature.

For the converse suppose that $E^7_{p,q}$ has positive curvature.  If $q_i \in [\underline{p}, \overline{p}]$ for some $i=1,2,3$ then by continuity there exists a solution to either (\ref{Y3horiz}) or (\ref{Y1horiz}), and hence either $Y_3$ or $\Ad_k Y_1$ is horizontal.  By Lemma \ref{lemY1Y3}, since the orbits of $S^1_{p,q}$ are one-dimensional, we can always find another horizontal vector $X$ which, together with either $Y_3$ or $\Ad_k Y_1$, will span a zero-curvature plane.  Theorem \ref{tapp} then implies that this horizontal zero-curvature plane must project to a zero-curvature plane in $E^7_{p,q}$ and so we have a contradiction.\qed
\end{proof}


We will now discuss some new results on the curvature of general Eschenburg spaces.

\begin{thm}
All Eschenburg spaces admit a metric with quasi-positive curvature.
\end{thm}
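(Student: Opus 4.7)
The plan is to produce, for every Eschenburg space $E^7_{p,q}$, a single point $A\in SU(3)$ at which $(SU(3),\met_1)$ has no horizontal zero-curvature plane; Theorem \ref{tapp} will then supply the required point of positive sectional curvature on $E^7_{p,q}$. By Lemma \ref{lemY1Y3} every zero-curvature plane in $(SU(3),\met_1)$ contains either $Y_3$ or some $\Ad_k Y_1$ with $k\in K=U(2)$, and since the horizontal distribution at $A$ has codimension one while the centralisers in $\mathfrak{su}(3)$ of $Y_3$ and of each $\Ad_k Y_1$ are four-dimensional, such a horizontal plane exists if and only if one of these distinguished elements is itself horizontal at $A$. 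Via equations (\ref{Y3horiz}) and (\ref{Y1horiz}) the task reduces to finding $A$ satisfying
\begin{equation*}
(\mathrm{I})\ \sum_j |a_{j3}|^2 p_j \neq q_3,\qquad (\mathrm{II})\ \sum_j |(Ak)_{j1}|^2 p_j \neq |k_{11}|^2 q_1 + |k_{21}|^2 q_2 \ \text{for every } k\in U(2).
\end{equation*}

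I would then exploit the symmetries collected in Section \ref{Biqs}: permutations of the $p_i$'s and of $q_1,q_2$ are isometries, while arbitrary permutations of the $q_i$'s and the swap $p\leftrightarrow q$ are diffeomorphisms of the biquotient. Since admitting a metric of quasi-positive curvature is preserved under diffeomorphism, this gives full freedom to reorder $(p,q)$ into any convenient normal form. If the hypothesis of Theorem \ref{poscurv} already holds then $E^7_{p,q}$ is positively curved and quasi-positive curvature follows immediately, so I may assume some $q_i\in[\underline p,\overline p]$.

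To construct $A$, observe that as $k$ runs over $U(2)$ the first column $(Ak)e_1$ sweeps precisely the unit vectors of the plane $V=\Span\{a^1,a^2\}\subset\C^3$, with $|k_{11}|^2=|\langle(Ak)e_1,a^1\rangle|^2$. Condition (II) is therefore the sign-definiteness on $V\setminus\{0\}$ of the Hermitian form
\begin{equation*}
w\ \longmapsto\ \langle Pw,w\rangle - |\langle w,a^1\rangle|^2 q_1 - |\langle w,a^2\rangle|^2 q_2,
\end{equation*}
equivalently the strict positivity of $\det\bigl[(A^*PA)_{2\times 2}-\mathrm{diag}(q_1,q_2)\bigr]$. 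I would then seek $A$ within a low-dimensional family—for instance an $SU(2)$-rotation mixing two chosen coordinates—and compute the above determinant as an explicit affine function of the rotation angle, linear between the endpoint values $(p_1-q_1)(p_2-q_2)$ and $(p_1-q_2)(p_2-q_1)$. Choosing the ordering of $(p,q)$ and the rotation so that this determinant is positive on some open interval, I would then pick an angle in that interval at which (I) is also satisfied (an open condition avoided only on a codimension-one set).

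The delicate point is that (II) must hold simultaneously for every $k\in U(2)$, a continuous family of constraints rather than a single inequality. For generic $(p,q)$ the family with $A\in K$ works, reducing (II) to positivity of one of the two products displayed above; but in the resonant sub-cases, where after all admissible reorderings both endpoint products vanish, the entire family $A\in K$ fails. There I would tilt $A$ out of $K$ so that the third coordinate enters the plane $V$, adding new terms to the determinant and providing the additional freedom needed to render it positive; the diffeomorphism $p\leftrightarrow q$ serves as a second tool, interchanging the roles of (I) and (II) whenever one direction is obstructed. Handling all these resonant sub-cases uniformly is the hardest step.
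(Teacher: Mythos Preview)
Your opening move---evaluate (\ref{Y3horiz}) and (\ref{Y1horiz}) at a diagonal matrix $A$---is exactly the paper's. At $A=I$ condition (II) becomes $(p_1-q_1)|k_{11}|^2+(p_2-q_2)|k_{21}|^2\neq 0$ for all unit $(k_{11},k_{21})$, i.e.\ $(p_1-q_1)(p_2-q_2)>0$; condition (I) then reads $p_3\neq q_3$, which follows automatically from $\sum p_i=\sum q_i$ and the inequality just obtained. Your Hermitian-form/determinant reformulation recovers precisely this product when $A$ is diagonal, so up to this point the two arguments coincide.

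Where you diverge is in handling the residual set where no reordering yields $(p_1-q_1)(p_2-q_2)>0$. You anticipate a genuine family of ``resonant'' $(p,q)$ and propose tilting $A$ out of $K$ together with the swap $p\leftrightarrow q$, conceding that this is the hardest step and leaving it open. The paper sidesteps all of this with a short combinatorial observation: the only obstruction to finding such a reordering is that some $p_i$ equals every $q_j$ (or, via the swap diffeomorphism, some $q_j$ equals every $p_i$), and the freeness condition (\ref{freeness}) then pins the space down, up to diffeomorphism, as the single Aloff--Wallach example $W_{-1,1}=E^7_{(-1,1,0),(0,0,0)}$. That space is already known by Wilking \cite{Wi} to admit almost positive curvature, hence in particular quasi-positive curvature. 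So there is no family of resonant cases, only one exceptional manifold handled by citation; your tilting programme and the interpolation between the two endpoint determinants are unnecessary.
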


\begin{proof}
We need to find a point in $\SU(3)$ at which there are no horizontal zero-curvature planes, i.e. at which $Y_3$ and $\Ad_k Y_1$ are not horizontal.

Let $A \in \SU(3)$ be a diagonal matrix.  Thus equation (\ref{Y1horiz}) becomes
        \begin{align*}
            &{} |k_{11}|^2 p_1 + |k_{21}|^2 p_2 = |k_{11}|^2 q_1 + |k_{21}|^2 q_2 \\
            &\iff (p_1 - q_1)|k_{11}|^2 + (p_2 - q_2)|k_{21}|^2 = 0.
        \end{align*}
Therefore, if
        \beq
        \label{quasicond}
            (p_1 - q_1)(p_2 - q_2) > 0
        \eeq
there is no $k \in K$ satisfying (\ref{Y1horiz}), i.e. $\Ad_k Y_1$ is not horizontal at $A$.

On the other hand, equation (\ref{Y3horiz}) becomes $p_3 = q_3$.  However, (\ref{quasicond}), together with $\sum p_i = \sum q_i$, implies that $p_3 \neq q_3$, i.e. that $Y_3$ is not horizontal at $A$.

Thus, if (\ref{quasicond}) holds, then $E^7_{p,q}$ has $\sec > 0$ at $[A]$, where $A \in \SU(3)$ is diagonal.

Recall the freeness condition (\ref{freeness}) and that permuting the $p_i$'s and $q_j$'s are diffeomorphisms.  Therefore, as long as there is no $i \in \{1,2,3\}$ such that $p_i = q_j$ for all $j \in \{1,2,3\}$, we may always reorder and relabel the $p_i$'s and $q_j$'s such that (\ref{quasicond}) holds.

By (\ref{freeness}), the only Eschenburg space satisfying the condition ``\emph{there is an $i \in \{1,2,3\}$ such that $p_i = q_j$ for all $j \in \{1,2,3\}$}'' is the Aloff-Wallach space $W_{-1,1} := E^7_{p,q}, p=(-1,1,0), q=(0,0,0)$.  However, Wilking \cite{Wi} has shown that $W_{-1,1}$ admits a metric with almost positive curvature, and so we are done.\qed
\end{proof}

The special subfamily $E^7_n := E^7_{p,q}, p = (1,1,n), q = (0,0,n+2),$ admits a cohomogeneity-one action by $\SU(2) \x \SU(2)$.  These cohomogeneity-one Eschenburg spaces are discussed in great detail in \cite{GSZ}.  We may assume that $n \geq 0$ since $E^7_n \cong E^7_{-(n+1)}$.  By Theorem \ref{poscurv}, $n>0$ implies that $E^7_n$ admits a metric with positive curvature.

\begin{thm}
\label{E0almostpos}
$E^7_0$ admits a metric with almost positive curvature.
\end{thm}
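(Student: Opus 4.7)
The plan is to apply the horizontality equations (\ref{Y3horiz}) and (\ref{Y1horiz}) from the proof of Theorem \ref{poscurv} to the specific parameters $p = (1,1,0)$, $q = (0,0,2)$, and thereby identify the set of $A \in SU(3)$ at which neither $Y_3$ nor any $\Ad_k Y_1$ (for $k \in K = U(2)$) lies in the horizontal subspace $\mcal{H}_A$. By Lemma \ref{lemY1Y3}, any zero-curvature $2$-plane in $(\mf{su}(3), \met_1)$ contains $Y_3$ or some $\Ad_k Y_1$; since $\mcal{H}_A$ is a linear subspace, the absence of these vectors from $\mcal{H}_A$ means that no horizontal $2$-plane at $A$ has zero sectional curvature in $(SU(3), \met_1)$. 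O'Neill's formula applied to the Riemannian submersion $(SU(3), \met_1) \to E^7_0$ then forces positive curvature at $[A]$ in the induced metric.

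For $p = (1,1,0)$, $q = (0,0,2)$, equation (\ref{Y3horiz}) becomes $|a_{13}|^2 + |a_{23}|^2 = 2$, which is incompatible with $A \in SU(3)$ being unitary; hence $Y_3$ is \emph{nowhere} horizontal on $SU(3)$. Since $q_1 = q_2 = 0$, equation (\ref{Y1horiz}) reduces to $|(Ak)_{11}|^2 + |(Ak)_{21}|^2 = 0$. Writing $k = \diag(k', \overline{\det k'})$ with $k' \in U(2)$ and setting $A' := (a_{ij})_{i,j=1,2}$, this condition says that the unit vector $(k_{11}, k_{21})^T \in \C^2$ lies in the kernel of $A'$. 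Consequently, some $\Ad_k Y_1$ is horizontal at $A$ if and only if $\det A' = 0$.

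Let $Z := \{A \in SU(3) : \det A' = 0\}$. Under the $S^1_{p,q}$-action one has $A' \mapsto z A'$, so $\det A' \mapsto z^2 \det A'$ and $Z$ is $S^1_{p,q}$-invariant. As the zero set of a non-constant real-algebraic function on $SU(3)$, $Z$ is closed with empty interior, and therefore its image in $E^7_0$ is a proper closed subset whose complement is open and dense. By the first paragraph, $E^7_0$ has positive sectional curvature on this complement, establishing almost positive curvature. I expect no substantive obstacle: the argument is essentially a direct specialisation of (\ref{Y3horiz})--(\ref{Y1horiz}), the only point deserving explicit verification being the $S^1_{p,q}$-invariance of $Z$, which is what permits the open dense set of positively curved points to descend to the quotient.
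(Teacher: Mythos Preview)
Your proof is correct and follows essentially the same approach as the paper: both specialise (\ref{Y3horiz}) and (\ref{Y1horiz}) to $p=(1,1,0)$, $q=(0,0,2)$, observe that $Y_3$ is nowhere horizontal, and reduce the horizontality of some $\Ad_k Y_1$ to the vanishing of the $2\times 2$ minor $\det(a_{ij})_{i,j\le 2}$, then use $S^1_{p,q}$-invariance of this condition to descend to the quotient. The paper adds the slightly stronger observation that the bad set is in fact a smooth codimension-two submanifold, but this is not needed for the almost-positive-curvature conclusion.
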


\begin{proof}
Given $p = (1,1,0)$ and $q = (0,0,2)$, equations (\ref{Y3horiz}) and (\ref{Y1horiz}) become
        \beq
        \label{EQ1}
        2 = |a_{13}|^2 + |a_{23}|^2
        \eeq
and
        \begin{align}
            \nonumber  |(Ak)_{11}|^2 + |(Ak)_{21}|^2 &= 0\\
            \nonumber \iff (Ak)_{11} = (Ak)_{21} &= 0\\
            \label{EQ2} \iff \twom{a_{11}}{a_{12}}{a_{21}}{a_{22}}  \twoonem{k_{11}}{k_{21}} &= 0
        \end{align}
respectively.  Since $A \in \SU(3)$ it is clear that (\ref{EQ1}) cannot be satisfied.  Since $k \in K = \U(2)$, we are only interested in solutions $\left(\bsm k_{11} \\ k_{21} \esm\right) \neq 0$.  This occurs if and only if
        $$\det \twom{a_{11}}{a_{12}}{a_{21}}{a_{22}} = 0,$$
which defines a codimension two sub-variety $\Omega \subset \SU(3)$ of points with horizontal zero-curvature planes.  Moreover it is easy to check that $\Omega$ is a smooth sub-variety.  Since the equation which defines $\Omega$ is preserved under the $S^1_{p,q}$-action, $E^7_0$ has almost positive curvature and points in $E^7_0$ with zero-curvature planes form a smooth codimension two submanifold.\qed
\end{proof}

We may fix a particular metric on $E^7_{p,q}$ by choosing $p_1 \leq p_2 \leq p_3$ and $q_1 \leq q_2 \leq q_3$.  Therefore Eschenburg's positive curvature condition is
        \beq
        \label{poscond}
            q_1 \leq q_2 < p_1 \leq p_2 \leq p_3 < q_3 \ \ {\rm or} \ \ q_1 < p_1 \leq p_2 \leq p_3 < q_2 \leq q_3.
        \eeq
It is natural to ask what happens when $q_2 = p_1$ or $q_2 = p_3$, which we refer to as the ``boundary'' of the positive curvature condition.

\begin{lem}
\label{Eschmnfds}
The only free $S^1_{p,q}$-actions on $\SU(3)$ satisfying $q_2 = p_1$ or $q_2 = p_3$ are, up to diffeomorphism,
        \begin{enumerate}
        \item
        \label{Action1}
            $p=(0,0,0)$ and $q=(- 1,0, 1)$, and

        \item
        \label{Action2}
            $p=(0,1,1)$ and $q=(0,0, 2)$.
        \end{enumerate}
\end{lem}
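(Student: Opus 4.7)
The plan is to leverage the diffeomorphism moves recorded in Section~\ref{Biqs}, together with the freeness condition~\eqref{freeness} and the equality $p_1+p_2+p_3 = q_1+q_2+q_3$, in order to enumerate all possibilities for $(p,q)$.

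First I would enlarge the list of moves by two elementary observations. Multiplying both diagonals by the central element $z^c I$ leaves the $S^1$-action unchanged, so $(p,q)$ and $(p+c, q+c)$ (taken entry-wise) determine the same biquotient. Reparametrising $z \mapsto z^{-1}$ fixes the circle subgroup itself, so $(p,q)$ and $(-p,-q)$ define the identical subgroup $S^1_{p,q} \subset SU(3) \x SU(3)$, hence the identical biquotient. Combining this negation with a re-sort to restore $p_1 \leq p_2 \leq p_3$ and $q_1 \leq q_2 \leq q_3$, the condition $q_2 = p_3$ becomes $\tilde q_2 = \tilde p_1$ for the new tuples. It therefore suffices to analyse the case $q_2 = p_1$.

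Under the assumption $q_2 = p_1$, I would observe that $p_1 - q_2 = 0$, and so~\eqref{freeness} applied to any $\sigma \in S_3$ with $\sigma(1) = 2$ forces $\gcd(0, p_2 - q_{\sigma(2)}) = |p_2 - q_{\sigma(2)}| = 1$. Letting $\sigma(2)$ run over $\{1,3\}$ gives $q_1, q_3 \in \{p_2 - 1, p_2 + 1\}$. I would then use the ordering $q_1 \leq q_2 = p_1 \leq p_2$ to rule out $q_1 = p_2 + 1$, forcing $q_1 = p_2 - 1$; next, rule out $q_3 = p_2 - 1$ by noting that otherwise $q_1 = q_2 = q_3 = p_2 - 1$, whence the sum identity forces $p_3 = p_2 - 2$, in violation of $p_2 \leq p_3$. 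Hence $q_3 = p_2 + 1$. The sum identity then yields $p_3 = p_2$, and combining $p_1 \leq p_2$ with $p_2 - 1 = q_1 \leq q_2 = p_1$ restricts $p_1 \in \{p_2 - 1, p_2\}$.

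The two subcases $p_1 = p_2$ and $p_1 = p_2 - 1$, after shifting by $-p_2$ and $-(p_2-1)$ respectively, produce exactly the tuples $(p,q) = ((0,0,0),(-1,0,1))$ and $((0,1,1),(0,0,2))$ of the lemma, and a direct check of~\eqref{freeness} confirms freeness in each case. I expect no serious obstacle; the only point requiring care is the preliminary symmetry reduction — specifically, confirming that the negation $(p,q) \mapsto (-p,-q)$ really is a biquotient-level move — after which the enumeration is mechanical.
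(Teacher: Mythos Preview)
Your proposal is correct and follows essentially the same route as the paper's proof: reduce to $q_2 = p_1$ via the negation-and-resort symmetry, then use the freeness condition~\eqref{freeness} together with the ordering to pin down $q_1 = p_2 - 1$, $q_3 = p_2 + 1$, deduce $p_3 = p_2$ from the sum constraint, and finally restrict $p_1$ to two values. The only cosmetic difference is that the paper shifts by the center at the outset (normalising $p_1 = q_2 = 0$) whereas you shift at the end; otherwise the arguments are the same.
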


\begin{proof}
We need only consider the case $q_2 = p_1$, since it is clear that $E^7_{p,q}$ is diffeomorphic to $E^7_{p',q'}$, where $p' = (-p_3, -p_2, -p_1)$, $q' = (-q_3, -q_2, -q_1)$.  Since $\Delta S^1$ commutes with $\SU(3)$ we may write $p=(0,p_2,p_3)$ and $q=(q_1,0, q_3)$ without loss of generality.  By considering the freeness condition (\ref{freeness}) and the ordering of our integers we must have $p=(0,p_2,p_3)$ and $q=(p_2 - 1,0, p_2 + 1)$.  Since $\sum p_i = \sum q_i$ we have $p=(0,p_2,p_2)$ and $q=(p_2 - 1,0, p_2 + 1)$.  Hence, since we have assumed that our triples of integers are ordered, i.e. $0 \leq p_2$ and $p_2 - 1 \leq 0 \leq p_2 + 1$, either $p_2=0$ or $p_2=1$ as desired.\qed
\end{proof}

Notice that the resulting manifolds are diffeomorphic to the exceptional Aloff-Wallach space $W^7_{-1,1}$ and the exceptional cohomogeneity-one Eschenburg space $E^7_0$ for actions \ref{Action1} and \ref{Action2} respectively.  As previously discussed, both manifolds have been shown to admit metrics with almost positive curvature.  Note also that action \ref{Action1} is the action given by $q_1 < q_2 = p_1 = p_2 = p_3 < q_3$, and action \ref{Action2} is the action given by $q_1 = q_2 = p_1 < p_2 = p_3 < q_3$.  Even though there are no other manifolds on the boundary of the positive curvature condition, we can prove the following:

\begin{thm}
\label{ofldalmostpos}
All orbifolds $E^7_{p,q}$ satisfying
        \beq
        \label{obfdalmostposcond}
            q_1 < q_2 = p_1 < p_2 \leq p_3 < q_3 \ \ {\rm or} \ \ q_1 < p_1 \leq p_2 < p_3 = q_2 \leq q_3,
        \eeq
admit almost positive curvature.
\end{thm}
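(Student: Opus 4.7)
\medskip
\noindent\textbf{Proof sketch.} The plan is to identify the locus of points in $SU(3)$ at which a horizontal zero-curvature plane exists and show that it is a proper closed $S^1_{p,q}$-invariant subvariety, so that its image in the orbifold is nowhere dense.  By Lemma~\ref{lemY1Y3}, any zero-curvature plane in $\mf{su}(3)$ contains either $Y_3$ or some $\Ad_k Y_1$; thus a horizontal zero-curvature plane at $A$ exists precisely when one of these vectors lies in the $7$-dimensional horizontal subspace $\mcal{H}_A$, in which case it can be extended to a horizontal $2$-plane.  By Theorem~\ref{tapp} this horizontal zero-curvature plane projects to a zero-curvature plane in $E^7_{p,q}$, so almost positive curvature amounts to showing that the set of $A\in SU(3)$ solving~(\ref{Y3horiz}), or~(\ref{Y1horiz}) for some $k\in K$, is nowhere dense.

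\medskip
For the first case $q_1 < q_2 = p_1 < p_2 \leq p_3 < q_3$, the left-hand side of~(\ref{Y3horiz}) is a convex combination of the $p_j$, hence bounded above by $p_3 < q_3$, so $Y_3$ is never horizontal.  For~(\ref{Y1horiz}), the right-hand side satisfies $|k_{11}|^2 q_1 + |k_{21}|^2 q_2 \leq q_2 = p_1$, with equality forcing $k_{11}=0$ (since $q_1 < p_1$), while the left-hand side satisfies $\sum_j |(Ak)_{j1}|^2 p_j \geq p_1$, with equality forcing $(Ak)_{21}=(Ak)_{31}=0$ (since $p_1 < p_2 \leq p_3$).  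Substituting $k_{11}=0$ and hence $|k_{21}|=1$ into $(Ak)_{j1} = a_{j1}k_{11} + a_{j2}k_{21}$ collapses these conditions to $a_{22}=a_{32}=0$, cutting out a proper closed subvariety of $SU(3)$.  Conversely, any such $A$ together with an off-diagonal $k \in U(2)$ realises equality in~(\ref{Y1horiz}).

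\medskip
The second case $q_1 < p_1 \leq p_2 < p_3 = q_2 \leq q_3$ reduces to the first via the diffeomorphism $(p,q) \mapsto ((-p_3,-p_2,-p_1),(-q_3,-q_2,-q_1))$ noted in the proof of Lemma~\ref{Eschmnfds}, which negates and reverses each triple and thereby exchanges the two inequality chains.  In either case the zero-curvature locus is a proper closed subvariety of $SU(3)$, manifestly invariant under the diagonal $S^1_{p,q}$-action (both~(\ref{Y3horiz}) and~(\ref{Y1horiz}) are preserved by left- and right-multiplication by diagonal unitaries), so its image in $E^7_{p,q}$ is closed and nowhere dense, which gives the claimed almost positive curvature.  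The main obstacle is the sharp use of the strict inequalities in~(\ref{obfdalmostposcond}): the argument relies on $p_1 < p_2$ (resp.\ $p_2 < p_3$) to force coordinate vanishing in the equality cases, and it would collapse if these strict inequalities were weakened.
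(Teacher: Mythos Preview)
Your proof is correct and follows essentially the same route as the paper's: both reduce the second inequality chain to the first via the diffeomorphism $(p,q)\mapsto((-p_3,-p_2,-p_1),(-q_3,-q_2,-q_1))$, dispose of~(\ref{Y3horiz}) by the bound $\sum |a_{j3}|^2 p_j \le p_3 < q_3$, and then squeeze~(\ref{Y1horiz}) between $p_1$ and $q_2=p_1$ to force $k_{11}=0$ and hence $a_{22}=a_{32}=0$. The paper phrases the squeeze via explicit bounds on $|k_{11}|^2$ and $|k_{21}|^2$ before specialising to $p_1=q_2$, but the content is identical.
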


\begin{proof}
As in the proof of Lemma \ref{Eschmnfds}, we need only consider
        \beq
        \label{cond}
            q_1 < q_2 = p_1 < p_2 \leq p_3 < q_3,
        \eeq
since $E^7_{p,q}$ is diffeomorphic to $E^7_{p',q'}$, where as before $p' = (-p_3, -p_2, -p_1)$ and $q' = (-q_3, -q_2, -q_1)$.

Notice that (\ref{cond}) implies that (\ref{Y3horiz}) has no solutions, since $q_3 > p_i$ for all $i = 1,2,3$.

Consider for a moment the more general case of Eschenburg spaces $E^7_{p,q}$ given by $q_1 < p_1 \leq q_2 < p_2 \leq p_3 < q_3$, hence not admitting positive curvature.  Suppose that there is a $k \in K$ such that $\Ad_k Y_1$ is horizontal at some $A \in \SU(3)$.  Then (\ref{Y1horiz}) implies that
        $$p_1 \leq \sum_{j=1}^3 |(Ak)_{j1}|^2 p_j = |k_{11}|^2 q_1 + |k_{21}|^2 q_2 \leq q_2.$$
Since $|k_{11}|^2 + |k_{21}|^2 = 1$ we thus have
        $$p_1 \leq |k_{11}|^2 (q_1 - q_2) + q_2 \leq q_2 \ \ {\rm and } \ \ p_1 \leq q_1 + |k_{21}|^2 (q_2 - q_1) \leq q_2,$$
which are equivalent to
        $$0 \leq |k_{11}|^2 \leq \frac{q_2 - p_1}{q_2 - q_1} \ \ {\rm and } \ \ \frac{p_1 - q_1}{q_2 - q_1} \leq |k_{21}|^2 \leq 1.$$
In particular, when the hypothesis of the theorem is satisfied, namely $p_1 = q_2$, we get $|k_{11}|^2 = 0$ and $|k_{21}|^2 = 1$, i.e.
        $$k = \threem{0}{k_{12}}{0}{k_{21}}{0}{0}{0}{0}{- \overline{k_{12} k_{21}}} \in K=\U(2).$$
Hence (\ref{Y1horiz}) becomes
        \begin{align*}
            &{} |a_{12}|^2 p_1 + |a_{22}|^2 p_2 + |a_{32}|^2 p_3 = q_2 = p_1\\
            &\iff |a_{22}|^2 (p_2 - p_1) + |a_{32}|^2 (p_3 - p_1) = 0, \ \ {\rm since} \ \ A \in \SU(3)\\
            &\iff a_{22} = a_{32} = 0, \ \ {\rm since} \ \ p_1 < p_2 \leq p_3\\
            &\iff A = \threem{0}{a_{12}}{0}{a_{21}}{0}{a_{23}}{a_{31}}{0}{a_{33}} \in \SU(3).
        \end{align*}
The set of such $A \in \SU(3)$ is preserved under the $S^1_{p,q}$-action, hence projects to a set of measure zero in $E^7_{p,q}$.  Therefore $E^7_{p,q}$ has almost positive curvature.\qed
\end{proof}

In \cite{FZ1} it is shown that the set $\left\{S^1_{p,q} \star A \ \Big| \ A = \left(\bsm 0 & a_{12} & 0 \\
                                a_{21} & 0 & a_{23} \\
                                a_{31} & 0 & a_{33} \esm \right) \right\} \subset E^7_{p,q}$
describes a totally geodesic lens space.  For $q_1 < q_2 = p_1 < p_2 \leq p_3 < q_3$ we know from the proof of Theorem \ref{ofldalmostpos} that these are the only points admitting zero-curvature planes.  The problem of determining how large the set of zero-curvature planes is at each point of this lens space is equivalent to determining how large the set of horizontal zero-curvature planes is at each
$A = \left(\bsm 0 & a_{12} & 0 \\
           a_{21} & 0 & a_{23} \\
           a_{31} & 0 & a_{33} \esm \right) \in \SU(3)$.

\begin{prop}
If $q_1 < q_2 = p_1 < p_2 \leq p_3 < q_3$ then there is a one-dimensional family of horizontal zero-curvature planes at each point
$A = \left(\bsm 0 & a_{12} & 0 \\
           a_{21} & 0 & a_{23} \\
           a_{31} & 0 & a_{33} \esm \right) \in \SU(3)$.
\end{prop}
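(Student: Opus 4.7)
\medskip
\noindent\emph{Plan.}  The strategy is to identify a single distinguished vector common to every horizontal zero-curvature plane at $A$ and then count the possible companions inside its centraliser.  From the proof of Theorem \ref{ofldalmostpos} we already know that at such $A$ the vector $Y_3$ is never horizontal and that $\Ad_k Y_1$ is horizontal only when $k_{11}=0$.  Any such $k$ factors as $k = P_{12}\,D$, where $P_{12}$ transposes the first two coordinates and $D$ is a diagonal unitary; since $D$ commutes with the diagonal matrix $Y_1$, one obtains $\Ad_k Y_1 = \Ad_{P_{12}}Y_1 = i\diag(1,-2,1) =: Z$, independently of $k$.  By Lemma \ref{lemY1Y3}, every horizontal zero-curvature plane at $A$ must therefore contain the fixed vector $Z$.

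For companions, a plane $\sigma = \Span\{Z,W\}$ has vanishing $\met_1$-curvature if and only if $W$ lies in the centraliser $\mcal{C} := \ker(\ad Z) \subset \mf{su}(3)$.  Indeed $Z\in \k$, so $\Phi(Z) = \lambda Z$, and Lemma \ref{Eschlem} applied with $X=\Phi(Z)$, $Y=\Phi(W)$ collapses its three bracket conditions to the single requirement $[Z,W]=0$.  A direct calculation identifies $\mcal{C}$ with the skew-Hermitian matrices block-diagonal for the eigenspace splitting $\C^3 = \Span(e_1,e_3) \oplus \C e_2$ of $Z$, so $\dim\mcal{C}=4$.  Since $Z\in \mcal{C}\cap \mcal{H}_A$, projectivising the quotient $(\mcal{C}\cap \mcal{H}_A)/\R Z$ produces an $\R P^1$-family of horizontal zero-curvature planes through $Z$ precisely when $\dim(\mcal{C}\cap \mcal{H}_A)=3$.

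The main obstacle is therefore to rule out the alternative $\dim(\mcal{C}\cap \mcal{H}_A)=4$, i.e.\ to show that the linear functional $\<\cdot,v_A\>_1$ does not vanish identically on $\mcal{C}$.  The cleanest route is to prove that $v_A = \Ad_{A^*}P - Q$ itself belongs to $\mcal{C}$: write $A = \tilde A\,P_{12}$, where $\tilde A$ is block-diagonal with a $1\times 1$ block $a_{12}$ on $\C e_1$ and a unitary $2\times 2$ block on $\Span(e_2,e_3)$; then $\Ad_{A^*}P = \Ad_{P_{12}}\bigl(\Ad_{\tilde A^*}P\bigr)$, the inner factor is block-diagonal for $\C e_1 \oplus \Span(e_2,e_3)$, and conjugation by $P_{12}$ transports it into $\mcal{C}$.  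Subtracting the diagonal matrix $Q$ preserves membership in $\mcal{C}$, so $v_A\in\mcal{C}$ and $\<v_A,v_A\>_1>0$ whenever $v_A\ne 0$.  Finally, a quick look at the diagonal entries of $v_A$ shows that $v_A=0$ would force $p_2|a_{21}|^2+p_3|a_{31}|^2 = q_1$; the column-norm identity $|a_{21}|^2+|a_{31}|^2=1$ places the left-hand side in $[p_2,p_3]$, disjoint from $\{q_1\}$ by the strict inequality $q_1<p_1<p_2$.
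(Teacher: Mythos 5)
Your proof is correct and follows essentially the same route as the paper's: identify $Z = Y_2 = i\,\diag(1,-2,1)$ as the unique forced vector (via Lemma \ref{lemY1Y3} together with $Y_3$ never being horizontal and $\Ad_k Y_1$ horizontal only for $k_{11}=0$), then count horizontal companions inside its centraliser $\mcal{C}=\ker(\ad Z)$. The one step you make explicit that the paper leaves implicit is the verification that the horizontality constraint is non-degenerate on $\mcal{C}$ (so that $\dim(\mcal{C}\cap\mcal{H}_A)=3$ rather than $4$), which you obtain cleanly by noting that $v_A=\Ad_{A^*}P-Q$ itself lies in $\mcal{C}$ and is nonzero.
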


\begin{proof}
Recall we have shown in the proof of Theorem \ref{ofldalmostpos} that $Y_3$ is never horizontal, and $\Ad_k Y_1$ being horizontal at $A$ implies that
        $$k = \bpm 0 & k_{12} & 0 \\
                k_{21} & 0 & 0 \\
                0& 0 & - \overline{k_{12} k_{21}} \epm \in K=\U(2).$$
Hence $\Ad_k Y_1 = Y_2 := i \left( \bsm 1 & & \\
                                        & -2 & \\
                                        & & 1 \esm \right)$.\\
Let $Y = \Phi^{-1} (Y_2)$, where $\< X, Z\>_1 = \< X, \Phi(Z)\>_0$.  Let $X \in \mathcal{H}_A$ be such that $\Span \{X,Y\}$ is a horizontal zero-curvature plane.  Then, by Lemma \ref{Eschlem} and since $\Phi^{-1} (Y_2) = \frac{1}{\lambda} Y_2 \in \k$, $[X,Y] = [X_\k, Y] = 0$, which is equivalent to
        \begin{align*}
            &{} [X,Y_2] = [X_\k, Y_2] = 0 \\
            &\iff [X,Y_2] = 0\\
            &\iff X = \threem{is}{0}{x}{0}{it}{0}{-\bar x}{0}{-i(s+t)},
        \end{align*}
where $s,t \in \R$, $x \in \C$.  We may assume without loss of generality that $\<X,Y\>_1 = 0$.  Hence
        $$X = \threem{is}{0}{x}{0}{0}{0}{-\bar x}{0}{-is}.$$
The set of such $X$ is $3$-dimensional.  We also require that $X$ is horizontal, i.e. $\<X, \Ad_{A^*} P - Q\>_1 = 0$, and without loss of generality we may assume that $||X||^2 = 1$.  Thus, for each
$A = \left(\bsm 0 & a_{12} & 0 \\
           a_{21} & 0 & a_{23} \\
           a_{31} & 0 & a_{33} \esm \right) \in \SU(3)$
there is a one-dimensional family of horizontal zero-curvature planes $\Span\{X,Y\}$.\qed
\end{proof}

\section{Bazaikin Spaces}
\label{Baz}

The proof of positive curvature on an infinite subfamily of the Bazaikin spaces (given in \cite{Zi2}, \cite{DE}) follows from essentially the same techniques as in the case of the Eschenburg spaces.  A slight modification of this argument allows us to prove Theorem \ref{thmA}\ref{QPBaz} and \ref{APBaz}.

Recall that the Bazaikin spaces are defined as
        $$B^{13}_{q_1, \dots, q_5} := \SU(5) \bq \syp(2) \cdot S^1_{q_1, \dots, q_5},$$
where $q_1, \dots, q_5 \in \Z$, and
        $$ \syp(2) \cdot S^1_{q_1, \dots, q_5} = (\syp(2) \x S^1_{q_1, \dots, q_5})/\Z_2,\ \ \ \Z_2 = \{\pm(1, I)\},$$
acts effectively on $\SU(5)$ via
        $$[A,z] \star B = \threemd{z^{q_1}}{\ddots}{z^{q_5}}  B  \twom{\hat A}{}{}{\bar z^q},$$
with $z \in S^1$, $A \in \syp(2) \hra \SU(4)$, $B \in \SU(5)$, and $q = \sum q_i$.  We recall that
        \begin{align*}
            \syp(2) &\hra \SU(4)\\
            A = S + T j &\lmt \hat A = \twom{S}{T}{- \bar T}{\bar S}.
        \end{align*}
It is not difficult to show that the action of $\syp(2) \cdot S^1_{q_1, \dots, q_5}$ is free if and only all $q_1, \dots, q_5$ are odd and
        \beq
        \label{freeBaz}
            (q_{\sigma(1)} + q_{\sigma(2)}, q_{\sigma(3)} + q_{\sigma(4)}) = 2 \ \ \textrm{for all} \ \ \sigma \in S_5.
        \eeq
Let $G = \SU(5) \supset K = \U(4)$, where $K \hra G$ via
        $$A \lmt \twom{A}{}{}{\overline{\det A}}.$$
Then $(G,K)$ is a rank one symmetric pair, with Lie algebras $(\g, \k)$.  With respect to the bi-invariant metric $\<X,Y\>_0 = - \Re \tr XY$ we may write $\g = \k \oplus \p$.  Define a metric, $\met_1$, on $G$ as in (\ref{met1}) which is left-invariant and right $K$-invariant.  In particular we have $\< X, Y \>_1 = \<X, \Phi(Y) \>_0$, where $\Phi(Y) = Y_\p + \lambda Y_\k$, $\lambda \in (0,1)$.  By Lemma \ref{Eschlem} we know that a plane $\sigma = \Span\{\Phi^{-1}(X),\Phi^{-1}(Y)\} \subset \g$ has zero-curvature with respect to $\<\ ,\>_1$ if and only if
        $$0=[X,Y]=[X_\p,Y_\p]=[X_\k,Y_\k].$$
It is clear that the action of $U := \syp(2) \cdot S^1_{q_1, \dots, q_5}$ is by isometries and we therefore get an induced metric on $B^{13}_{q_1, \dots, q_5} = G \bq U$.

Let $Q = i \diag(q_1, \dots, q_5)$. From our discussion of vertical subspaces in Section \ref{Biqs}, the vertical subspace at $A \in \SU(5)$ with respect to the $U$-action may be written as
        $$\V_A = \left\{ t \Ad_{A^*} Q - \twom{X}{}{}{i t q} \ \Big| \ t \in \R, X \in \mf{sp}(2) \subset \mf{su}(4) \right\}$$
where $A^* = \bar A^t$.  Our aim is to determine when zero-curvature planes with respect to $\met_1$ are horizontal at $A \in \SU(5)$.  A vector $\Phi^{-1} (X)$ is orthogonal to $\V_A$ with respect to $\met_1$ if and only if
        \beq
        \label{BazHorizCond}
            \left\< X, \Ad_{A^*} Q - \left(\bsm 0 &&&& \\ & 0 &&& \\ && 0 && \\ &&& 0 & \\ &&&& i q \esm\right) \right\>_0 = 0 \ \ \ {\rm and} \ \ \ X \perp_0 \mf{sp}(2) \subset \mf{su}(4),
        \eeq
where $\perp_0$ denotes orthogonality with respect to $\met_0$.

\begin{lem}
\label{Baz0curv}
A $2$-plane $\sigma = \Span\{\Phi^{-1}(X),\Phi^{-1}(Y)\} \subset \g$ is a horizontal zero-curvature plane with respect to $\met_1$ if and only if either
        $$W_1 := \diag( i, i, i , i, -4 i) \ \ {\rm or} \ \ W_2 := \Ad_k \diag(2i,-3i,2i,-3i, 2i),$$
for some $k \in \syp(2)$, is in $\sigma$ and is horizontal.
\end{lem}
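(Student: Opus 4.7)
The plan is to mirror the proof of Lemma \ref{lemY1Y3} in the Eschenburg setting, exploiting that both $(G,K) = (SU(5), U(4))$ and the sub-pair $(SU(4), Sp(2))$ are rank-one symmetric pairs. First I would apply Lemma \ref{Eschlem}: the plane $\sigma = \Span\{\Phi^{-1}(X), \Phi^{-1}(Y)\}$ has $\sec(\sigma) = 0$ with respect to $\met_1$ if and only if $[X,Y] = [X_\p, Y_\p] = [X_\k, Y_\k] = 0$. The rank-one condition on $(G,K)$ makes $[X_\p, Y_\p] = 0$ force $X_\p$ and $Y_\p$ to be proportional, so after replacing $Y$ with a suitable linear combination of $X$ and $Y$ (which does not change $\sigma$), I may assume $Y \in \k$. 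I would then split into cases by whether $X_\p$ vanishes.

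If $X_\p = 0$, so $\sigma \subset \k$, I would decompose $\k = \R W_1 \oplus \mf{su}(4)$ into centre and semisimple part and write $X = \alpha W_1 + X'$, $Y = \beta W_1 + Y'$. The horizontality condition (\ref{BazHorizCond}) forces the upper $4 \times 4$ block of each vector to be perpendicular to $\mf{sp}(2)$, and since $iI_4$ is already orthogonal to the trace-free $\mf{sp}(2)$, this means $X', Y' \in \mf{su}(4) \ominus \mf{sp}(2)$, the isotropy complement of the rank-one symmetric pair $(SU(4), Sp(2))$ of type AII. Commutativity of $X'$ and $Y'$, inherited from $[X,Y]=0$, then forces linear dependence (since rank one means every commuting pair in the isotropy complement lies in a common $1$-dimensional Cartan subspace), so an appropriate linear combination of $X$ and $Y$ lies in $\R W_1$, producing a nonzero multiple of $W_1$ in $\sigma$.

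If $X_\p \neq 0$, I would first use the right $Sp(2) \subset K$ subaction, which preserves both the bracket structure and the $\mf{sp}(2)$-orthogonality condition (since $\Ad_{Sp(2)}$ preserves $\mf{sp}(2)$), to normalise $X_\p$. Parametrising $X_\p$ by a vector $v \in \C^4 \cong \H^2$ and using transitivity of the $Sp(2)$-action on the unit sphere $S^7 \subset \H^2$, I may assume $X_\p = a(E_{15} - E_{51})$ with $a > 0$. A direct computation of $[X_\p, Y_\k] = 0$ will then force the upper $4 \times 4$ block of $Y$ into block-diagonal form $\diag(\eta, Y_1)$ with $Y_1 \in \mf{u}(3)$ and $2\eta + \tr(Y_1) = 0$. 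Imposing $Y_\k \perp \mf{sp}(2)$ against an explicit basis of $\mf{sp}(2)$ (not merely its diagonal torus) will kill the off-diagonal entries of $Y_1$ and fix its diagonal in the pattern $(y, -\tfrac{2}{3}y, y)$ with $\eta = -\tfrac{2}{3}y$, so that $Y$ becomes a scalar multiple of $\diag(2i, -3i, 2i, -3i, 2i)$. Reversing the $Sp(2)$-normalisation shows that $Y$ is a scalar multiple of $\Ad_k W_2$ for some $k \in Sp(2)$, and hence $\Ad_k W_2 \in \sigma$ up to rescaling.

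The converse direction should be essentially automatic: $W_1$ is central in $\k$, so Lemma \ref{Eschlem} manufactures a zero-curvature plane from $\Span\{W_1, X\}$ for any horizontal $X \in \k$ independent of $W_1$; the $\Ad_k W_2$ case is analogous, using its large centraliser in $\g$ to locate a compatible horizontal partner. The main obstacle I anticipate is precisely the Case B horizontality calculation: the $1+3$ block pattern imposed on $Y_1$ by $[X_\p, Y_\k] = 0$ is misaligned with the $2+2$ symplectic block pattern of $\mf{sp}(2)$, so pairing $Y_\k$ against the off-diagonal symplectic generators — rather than merely the maximal torus — is what forces both the $(a,b,a,b)$ pattern of $W_2$ and the precise ratio $b = -3a/2$.
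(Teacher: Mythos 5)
Your proposal is correct and follows the same overall architecture as the paper's proof: reduce to $Y\in\k$ via rank-one of $(SU(5),U(4))$; split on $X_\p=0$ versus $X_\p\neq 0$; in Case A, reduce inside $\k$ to linear dependence in $\m=\mf{sp}(2)^\perp\subset\mf{su}(4)$ via rank-one of $(SU(4),Sp(2))$ and extract $W_1$; in Case B, extract $W_2$ up to $\Ad_{Sp(2)}$; then establish the converse by dimension counting. The genuine difference is the Case B normalization. You propose to use $Sp(2)$-transitivity on $S^7\subset\C^4\cong\H^2$ to normalize $X_\p$ to $a(E_{15}-E_{51})$ and then directly solve the resulting linear system for $Y_\k$. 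The paper instead leaves $X_\p$ alone, reads $[X_\p,Y_\k]=0$ as an \emph{eigenvector} condition $Zx=-(\tr Z)x$, and normalizes $Z'=Z-\tfrac{\tr Z}{4}I$ using $Sp(2)=Spin(5)$-transitivity on distance spheres in $\m\cong\R^5$; comparing $-4it$ against the two eigenvalues of $Z$ then gives $s=\pm 5t$ and hence $W_2$. Both routes terminate correctly; the paper's yields the $\Ad_k$ conjugation directly from the chosen normal form, whereas yours recovers it by undoing the $X_\p$-normalization at the end, which is fine but slightly less transparent. Your anticipated obstacle in Case B does indeed dissolve: the constraints $Z'_{1j}=Z'_{j1}=0$ ($j=2,3,4$) combined with $Z'\in\m$ (i.e.\ block form $\bigl(\begin{smallmatrix}P & Q\\ -\bar Q^t & -\bar P\end{smallmatrix}\bigr)$ with $P$ traceless skew-Hermitian and $Q$ antisymmetric) force $Q=0$ and $P=\diag(i\alpha,-i\alpha)$, giving $Z'=\diag(i\alpha,-i\alpha,i\alpha,-i\alpha)$ as required.
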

\begin{proof}
Suppose that the plane $\sigma = \Span\{\Phi^{-1}(X),\Phi^{-1}(Y)\}$ has zero-curvature with respect to $\met_1$.  Then, since $[X_\p,Y_\p] = 0$ by Lemma \ref{Eschlem}, we may assume without loss of generality that $Y_\p = 0$, i.e $X = X_\p + X_\k$, $Y = Y_\k$.

If we also have $X_\p = 0$, then $X, Y \in \k$.  Notice that $\k = \mf{z} \oplus \mf{sp}(2) \oplus \m$, where $\mf{z} \perp \mf{su}(4)$ is the centre of $\k$, generated by $\diag( i, i, i , i, -4 i)$, and $\m = \mf{sp}(2)^\perp \subset \mf{su}(4)$.  But we have assumed that $X, Y \perp_0 \mf{sp}(2)$.  Thus $X, Y \in \mf{z} \oplus \m$, and $[X,Y] = 0$ if and only if $[X_\m, Y_\m] = 0$.  Now $\SU(4) = Spin(6)$, $\syp(2) = Spin(5)$ and $(\SU(4), \syp(2))$ is a rank one symmetric pair.  Therefore $X_\m, Y_\m$ must be linearly dependent and we may assume without loss of generality that $X = X_\m$, $Y = Y_{\mf{z}}$.  Then $\mf{z} \subset \sigma$, i.e. $W_1 = \diag(i, i, i , i, -4 i) \in \sigma$.

We now note that $W_1$ being horizontal is not only a necessary condition for $\sigma \subset \k$ to be a horizontal zero-curvature plane, but also sufficient for the existence of such a plane as, by counting dimensions, we may always find a vector $X \in \m$ such that $\sigma = \Span\{\Phi^{-1}(X),\Phi^{-1}(W_1)\}$ is a horizontal zero-curvature plane.

On the other hand, suppose now that $X_\p \neq 0$.  Then the conditions for zero-curvature become $0=[X_\p,Y_\k]=[X_\k,Y_\k]$.  Suppose that
        $$X_\p = \twom{0}{x}{- \bar x^t}{0}, \ \ Y = Y_\k = \twom{Z}{}{}{- \tr Z},$$
where $x \in \C^4$ and $Z \in \u(4) = \mf{z} \oplus \mf{su}(4)$.  Then $0=[X_\p,Y_\k]$ if and only if $Z x = -(\tr Z) x$.  Let $Z = i t I + Z' \in \mf{z} \oplus \mf{su}(4)$, $t \in \R$.  Since it is required that $Y \perp \mf{sp}(2)$ we have $Z' \perp \mf{sp}(2) \subset \mf{su}(4)$.  Recall that $\SU(4) = Spin(6)$, $\syp(2) = Spin(5)$.  Therefore $\SU(4)/\syp(2) = S^5$ and, since $\syp(2) = Spin(5)$ acts transitively on distance spheres in $\m = \mf{sp}(2)^\perp \subset \mf{su}(4)$, we may write
        $$Z' = k \fourm{is}{-is}{is}{-is} k^{-1}, \ \ \ k \in \syp(2).$$
This in turn implies that $Z$ may be written as
        $$Z = k \fourm{i(t+s)}{i(t-s)}{i(t+s)}{i(t-s)} k^{-1}, \ \ \ k \in \syp(2).$$
But we established above that $- \tr Z = - 4it$ is an eigenvalue of $Z$.  Therefore either $-4t = t+s$ or $-4t = t-s$, i.e. $s = -5t$ or $s = 5t$.  Thus we have shown that $Y$ must be conjugate by an element of $\syp(2)$ to either $\diag( -4it, 6it, -4it, 6it, -4it)$ or $\diag( 6it, -4it, 6it, -4it, -4it)$, and so up to scaling we have
        $$Y = k \fivem{2i}{-3i}{2i}{-3i}{2i} k^{-1}, \ \ \ k \in \syp(2) \subset \SU(4) \subset \SU(5).$$
Notice that $\Phi^{-1}(Y)$ is a multiple of $Y$ and so we have $Y \in \sigma$.  Conversely, if such a vector $Y$ is horizontal it is not difficult to find a complementary vector $X$ such that $\sigma = \Span\{\Phi^{-1}(X),\Phi^{-1}(Y)\}$ is a horizontal zero-curvature plane.  Set $X_\k = 0$.  $X$ is therefore automatically orthogonal to $\mf{sp}(2)$ and it remains to choose $X_\p$ such that $X$ satisfies the first condition of (\ref{BazHorizCond}), namely that $X$ is orthogonal to a one-dimensional subspace.  A choice of appropriate $X_\p$ is equivalent to choosing an eigenvector for $Z$ above.  The set of such eigenvectors has dimension $>1$ and so we may thus choose $X_\p$ such that $X$ has the desired properties.\qed
\end{proof}

At this stage of the positive curvature argument in \cite{Zi2} a lemma due to Eschenburg is applied to avoid direct computations.  However, to prove Theorem \ref{thmA}\ref{QPBaz} and \ref{APBaz} we need to perform these computations in order to derive some equations which may be exploited in a similar manner to that employed in Section \ref{Esch}.

\begin{lem}
\label{Bazeqns}
The vectors
        $$W_1 = \diag( i, i, i , i, -4 i) \ \ \ {\rm and} \ \ \ W_2 = \Ad_k \diag(2i,-3i,2i,-3i,2i),$$
$k \in \syp(2)$, are horizontal with respect to $\met_1$ at $A = (a_{ij}) \in \SU(5)$ if and only if
        \begin{align}
        \label{BazEqn1}
             q &= \sum_{\ell = 1}^5 |a_{\ell 5}|^2 q_\ell, \ \ {\rm and}\\
        \label{BazEqn2}
            0 &= \sum_{\ell = 1}^5 (|(Ak)_{\ell 2}|^2 + |(Ak)_{\ell 4} |^2)q_\ell
        \end{align}
respectively.
\end{lem}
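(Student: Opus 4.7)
The plan is to verify the horizontality criterion (\ref{BazHorizCond}) for $W_1$ and $W_2$ directly. This criterion consists of two conditions: orthogonality of the vector to $\mf{sp}(2) \subset \mf{su}(4) \subset \mf{su}(5)$ (embedded in the upper-left $4\times 4$ block), and orthogonality to $\Ad_{A^*}Q - \diag(0,0,0,0,iq)$. Throughout I interpret $k \in Sp(2)$ as sitting in $SU(5)$ via the block embedding $k \mapsto \diag(k,1)$, consistent with the definition of the $Sp(2)$-action on $SU(5)$. I will first show that the $\mf{sp}(2)$-orthogonality is automatic for both $W_i$, and then read off the second condition as a single trace identity.

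For $W_1 = \diag(i,i,i,i,-4i)$, observe that $W_1$ is (up to scale) the generator of the central summand $\mf{z}(\mf{u}(4)) \subset \k$ under $\mf{u}(4) \hookrightarrow \mf{su}(5)$, $X \mapsto \diag(X,-\tr X)$; hence $W_1 \perp_0 \mf{su}(4)$ and in particular $W_1 \perp_0 \mf{sp}(2)$. For $W_2 = \Ad_k \diag(2i,-3i,2i,-3i,2i)$, decompose
\[
\diag(2i,-3i,2i,-3i) = -\tfrac{i}{2}\,I_4 + \tfrac{5i}{2}\,\diag(1,-1,1,-1).
\]
The first summand lies in $\mf{z}(\mf{u}(4)) \perp \mf{su}(4)$, while the second, via the identification $Z' = k\diag(is,-is,is,-is)k^{-1}$ exhibited in the proof of Lemma~\ref{Baz0curv}, lies in $\m = \mf{sp}(2)^\perp \subset \mf{su}(4)$. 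Thus $\diag(2i,-3i,2i,-3i,2i) \perp_0 \mf{sp}(2)$, and since $\mf{sp}(2)$ is $\Ad(Sp(2))$-invariant this orthogonality is preserved under $\Ad_k$.

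With orthogonality to $\mf{sp}(2)$ automatic, horizontality reduces to $\langle W_i, \Ad_{A^*}Q \rangle_0 = \langle W_i, \diag(0,0,0,0,iq)\rangle_0$. By bi-invariance and cyclicity of the trace, $\langle \Ad_k D, \Ad_{A^*}Q\rangle_0 = \langle D, (Ak)^* Q(Ak)\rangle_0$, so both cases reduce to evaluating $-\Re\tr(D \cdot B^*QB)$ for diagonal $D$ (namely $W_1$, resp.\ $\diag(2i,-3i,2i,-3i,2i)$) and unitary $B$ (namely $A$, resp.\ $Ak$). Only the diagonal entries $(B^*QB)_{jj} = i\sum_\ell q_\ell|b_{\ell j}|^2$ contribute, and the row-sum identity $\sum_j |b_{\ell j}|^2 = 1$ collapses the double sum to a single-column sum.

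Carrying this out for $W_1$ produces $q - 5\sum_\ell q_\ell|a_{\ell 5}|^2$ from the $\Ad_{A^*}Q$-term and $-4q$ from the subtracted term, giving $5q - 5\sum_\ell q_\ell|a_{\ell 5}|^2 = 0$; dividing by $5$ yields the first stated equation. For $W_2$, noting that $k$ fixes $e_5$ (so $(Ak)_{\ell 5} = a_{\ell 5}$ and $(W_2)_{55}=2i$), the same recipe produces $2q - 5\sum_\ell q_\ell\bigl(|(Ak)_{\ell 2}|^2+|(Ak)_{\ell 4}|^2\bigr)$ from the $\Ad_{A^*}Q$-term and $2q$ from the subtracted term, yielding the second equation after dividing by $-5$. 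No real obstacle arises beyond careful bookkeeping; the one point requiring attention is to track the central $\mf{u}(1)$-contributions so that they cancel precisely against the subtracted $\diag(0,0,0,0,iq)$ piece and isolate the remaining single-column sum.
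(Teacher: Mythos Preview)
Your proof is correct and follows essentially the same approach as the paper: reduce horizontality to the two conditions in (\ref{BazHorizCond}), dispatch the $\mf{sp}(2)$-orthogonality (the paper defers this to ``our discussion above,'' whereas you give an explicit decomposition of $\diag(2i,-3i,2i,-3i)$ into $\mf{z}\oplus\m$), and then compute $\langle W_i, v_A\rangle_0$ as a trace, collapsing via the unitary row-sum identity. The only point you leave implicit that the paper makes explicit is that $W_1, W_2 \in \k$ forces $\Phi^{-1}(W_i)$ to be a scalar multiple of $W_i$, so that (\ref{BazHorizCond}) with $X = W_i$ indeed characterises horizontality of $W_i$ itself; this is harmless since it follows immediately from the definition of $\Phi$.
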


\begin{proof}
We first recall that both $W_1$ and $W_2$ lie in $\k = \u(4)$.  Therefore $W_1$ and $W_2$ are horizontal with respect to $\met_1$ if and only if they are horizontal with respect to $\met_0$.  Moreover, $W_1$ and $W_2$ are both orthogonal to $\mf{sp}(2)$ with respect to the bi-invariant metric by our discussion above.  Hence we need only obtain expressions for $W_1$ and $W_2$ being orthogonal with respect to $\met_0$ to $v_A := \Ad_{A^*} Q - \diag(0, 0, 0, 0, iq)$, where $Q = \diag(i q_1, \dots, iq_5)$.

Recall that $\<X,Y\>_0 = -\Re \tr (XY)$.  Then $W_1$ is horizontal if and only if
        \begin{align*}
            -4 q &= \left\<\diag(0,0,0,0,iq), W_1 \right\>_0\\
                &= \<\Ad_{A^*} Q, W_1 \>_0\\
                &= \sum_{\ell = 1}^5 ( |a_{\ell 1}|^2 + |a_{\ell 2}|^2 + |a_{\ell 3}|^2 + |a_{\ell 4}|^2 - 4 |a_{\ell 5}|^2) q_\ell.
        \end{align*}
Now using the fact that $A$ is unitary together with $q = \sum_{\ell = 1}^5 q_\ell$ yields
        $$- 4 q = q - 5 \sum_{\ell = 1}^5 |a_{\ell 5}|^2 q_\ell$$
as desired.

Consider now $W_2 = \Ad_k \widehat W$, where $\widehat W = \diag(2i, -3i, 2i, -3i, 2i)$.  Then $W_2$ is horizontal if and only if
        \begin{align*}
            2 q &= \left\<\diag(0,0,0,0,iq), \widehat W \right\>_0\\
                &= \left\<\Ad_{k^*} \diag(0,0,0,0,iq), \widehat W \right\>_0 \ \ \ {\rm for} \ \ k \in \syp(2) \subset \SU(4)\\
                &= \left\<\diag(0,0,0,0,iq), W_2 \right\>_0\\
                &= \<\Ad_{A^*} Q, W_2 \>_0\\
                &= \left\<\Ad_{(Ak)^*} Q, \widehat W \right\>_0\\
                &= \sum_{\ell = 1}^5 \! \left(2 |(Ak)_{\ell 1}|^2 \! - 3 |(Ak)_{\ell 2}|^2
                          \! + 2 |(Ak)_{\ell 3}|^2 \! - 3 |(Ak)_{\ell 4}|^2 \! + 2 |(Ak)_{\ell 5}|^2\right) q_\ell\\
                &= \sum_{\ell = 1}^5 \left(2 - 5 \left(|(Ak)_{\ell 2}|^2 + |(Ak)_{\ell 4}|^2\right)\right) q_\ell, \ \ \ \text{since $A$ is unitary}.
        \end{align*}
Equation (\ref{BazEqn2}) now follows immediately from $q = \sum_{\ell = 1}^5 q_\ell$.\qed
\end{proof}


It is well-known (see \cite{Zi2}, \cite{DE}) that a general Bazaikin space $B^{13}_{q_1, \dots, q_5}$ admits positive curvature if and only if $q_i + q_j > 0$ for all $1 \leq i < j \leq  5$.  Since each $q_j$ is odd, it is clear that at least three of the $q_j$ must have the same sign.  Suppose that four of the $q_j$ share the same sign.  We may assume without loss of generality that $q_1, \dots, q_4$ are all positive.  We now prove Theorem \ref{thmA}\ref{QPBaz}.

\begin{thm}
\label{BazQP}
All $B^{13}_{q_1, \dots, q_5}$ with $q_1, \dots, q_4 > 0$ admit quasi-positive curvature.
\end{thm}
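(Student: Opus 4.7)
The plan is to exhibit a single point of $B^{13}_{q_1, \dots, q_5}$ at which every tangent plane has strictly positive sectional curvature.  Since $U$ acts by isometries on $(SU(5), \met_1)$, the discussion at the end of Section \ref{Biqs} applies: every zero-curvature plane downstairs lifts to a horizontal zero-curvature plane upstairs, so it suffices to find a single $A \in SU(5)$ at which no horizontal zero-curvature plane exists.  By Lemmas \ref{Baz0curv} and \ref{Bazeqns}, this reduces to showing that neither (\ref{BazEqn1}) nor (\ref{BazEqn2}) admits a solution at $A$.

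I would take $A = I$, the identity of $SU(5)$, motivated by the desire to exploit the positivity of $q_1, \dots, q_4$ while isolating the lone (possibly negative) parameter $q_5$.  Equation (\ref{BazEqn1}) at $A = I$ reads $q = q_5$, equivalently $q_1 + q_2 + q_3 + q_4 = 0$, which is incompatible with $q_1, \dots, q_4 > 0$; hence $W_1$ is never horizontal at $I$.

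For (\ref{BazEqn2}), recall that $k \in Sp(2)$ is embedded in $SU(5)$ as $\diag(\hat k, 1)$, so that $Ik = k$ has last row $(0, 0, 0, 0, 1)$.  Thus $(Ik)_{5, 2} = (Ik)_{5, 4} = 0$ and the sum in (\ref{BazEqn2}) collapses to
$$\sum_{\ell=1}^4 \bigl( |\hat k_{\ell 2}|^2 + |\hat k_{\ell 4}|^2 \bigr)\, q_\ell.$$
Since columns $2$ and $4$ of $\hat k \in SU(4)$ are unit vectors, the coefficients $|\hat k_{\ell 2}|^2 + |\hat k_{\ell 4}|^2$ are non-negative and sum to $2$.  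Combined with $q_1, \dots, q_4 > 0$, this makes the sum a non-negative combination of positive reals with positive total weight, hence strictly positive.  So $W_2$ is never horizontal at $I$ for any $k \in Sp(2)$.

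With both horizontality conditions ruled out, there is no horizontal zero-curvature plane at $I \in SU(5)$, so $[I] \in B^{13}_{q_1, \dots, q_5}$ is a point of strictly positive sectional curvature, which is exactly what quasi-positive curvature requires.  I do not foresee a serious obstacle: the proof is a direct verification once the point $A = I$ is identified, and the only insight needed is the recognition that the identity is the natural test point to pit the sign hypothesis on $q_1, \dots, q_4$ against the two horizontality equations already provided by Lemma \ref{Bazeqns}.
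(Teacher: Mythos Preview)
Your proof is correct and follows essentially the same approach as the paper: the paper tests at a diagonal $A \in SU(5)$ (of which $A = I$ is the natural representative), reduces (\ref{BazEqn1}) to $q_5 = q$ and (\ref{BazEqn2}) to $0 = \sum_{\ell=1}^4 (|k_{\ell 2}|^2 + |k_{\ell 4}|^2)\, q_\ell$, and rules both out using $q_1,\dots,q_4 > 0$ exactly as you do. Your observation that the coefficients in the second sum total $2$ is a slightly cleaner way of phrasing the paper's remark that at least two of them are nonzero.
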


\begin{proof}
As we established in Lemmas \ref{Baz0curv} and \ref{Bazeqns}, there is a horizontal zero-curvature plane at $A \in \SU(5)$ if and only if we can solve either equation (\ref{BazEqn1}) or equation (\ref{BazEqn2}) at $A$.  If we allow $A$ to be diagonal then equations (\ref{BazEqn1}) and (\ref{BazEqn2}) become
        \begin{align}
        \label{BazQPeq1}
            q_5  &= \sum_{\ell = 1}^5 q_\ell, \ \ \ {\rm and}\\
        \label{BazQPeq2}
            0 &= \sum_{\ell = 1}^5 \left(|k_{\ell 2}|^2 + |k_{\ell 4}|^2\right) q_\ell
        \end{align}
respectively.  By hypothesis $q_1, \dots, q_4 > 0$ and therefore equality in (\ref{BazQPeq1}) is impossible.  On the other hand, because of how we have embedded $\syp(2)$ in $\SU(5)$, both $k_{52}$ and $k_{54}$ are zero.  Now since $k$ is unitary there are at least two non-zero coefficients $|k_{\ell 2}|^2 + |k_{\ell 4}|^2$, $\ell = 1, \dots 4$.  Therefore the right-hand side of equation (\ref{BazQPeq2}) is positive and thus no solutions exist.  We have shown there are no horizontal zero-curvature planes at diagonal $A \in \SU(5)$, which in turn implies the desired result.\qed
\end{proof}

It is natural to ask whether we can make a stronger curvature statement than quasi-positive curvature on the ``boundary'' of the positive curvature condition, namely when $q_i + q_j = 0$ for some $i,j$.  In fact, this is a rather restrictive condition.

\begin{lem}
\label{Bazmnfds}
Up to diffeomorphism, the spaces $B^{13}_{1,1,1,n,-n}$, $n \in \Z$ odd, describe all Bazaikin spaces satisfying $q_i + q_j = 0$ for some $1 \leq i < j \leq 5$.
\end{lem}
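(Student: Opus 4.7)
The plan is to classify, up to diffeomorphism, all quintuples of odd integers $(q_1,\ldots,q_5)$ satisfying the Bazaikin freeness condition \eqref{freeBaz} together with the hypothesis $q_i + q_j = 0$ for some $i<j$.

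First, since permuting the indices induces a diffeomorphism of the corresponding Bazaikin spaces (via the $U^g_R$ operation with $g$ a suitable permutation matrix in $SU(5)$, combined with the Weyl group of $Sp(2)$ and, where needed, the $\wh U$-operation to handle index $5$), I may assume after relabelling that the vanishing pair is $\{4,5\}$ and write $q_4 = n$, $q_5 = -n$ with $n \in \Z$ odd. Next, for each $k \in \{1,2,3\}$, specializing the freeness condition \eqref{freeBaz} to the partition $\{\{4,5\},\{i,j\},\{k\}\}$ of $\{1,\ldots,5\}$, where $\{i,j\} = \{1,2,3\}\setminus\{k\}$, gives
\[
\gcd(q_4+q_5,\, q_i+q_j) = \gcd(0,\, q_i+q_j) = |q_i+q_j| = 2,
\]
so $q_i+q_j \in \{2,-2\}$ for every pair $\{i,j\} \subset \{1,2,3\}$.

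I would then enumerate the eight sign patterns for the three sums $q_1+q_2$, $q_1+q_3$, $q_2+q_3$ and solve the resulting linear systems. Exploiting the symmetry among $\{q_1,q_2,q_3\}$, the triple $(q_1,q_2,q_3)$ must be, up to permutation, one of $(1,1,1)$, $(-1,-1,-1)$, $(3,-1,-1)$ or $(-3,1,1)$. The sign-flip diffeomorphism $B^{13}_{q_1,\ldots,q_5} \cong B^{13}_{-q_1,\ldots,-q_5}$ arising from $z \mapsto \bar z$ on $S^1$ identifies $(-1,-1,-1,n,-n)$ with $(1,1,1,-n,n)$ and $(-3,1,1,n,-n)$ with $(3,-1,-1,-n,n)$, so only two candidate families remain: $B^{13}_{1,1,1,n,-n}$ and $B^{13}_{3,-1,-1,n,-n}$.

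The final step, which I expect to be the crux, is to show that the mixed-sign family coincides, up to diffeomorphism, with a member of the first family. Since Weyl-group permutations in $Sp(2)$ preserve the weight multiset, the required identification cannot be detected at the level of weights alone; it must exploit a more subtle symmetry of the full biquotient presentation, such as a combination of the $\wh U$-operation with conjugation by an element of $SU(5)$ normalising the Bazaikin form $\diag(A,\bar z^q)$ on the right factor. Carrying out this identification, and determining which member $B^{13}_{1,1,1,n',-n'}$ the space $B^{13}_{3,-1,-1,n,-n}$ reduces to, is the technical heart of the lemma.
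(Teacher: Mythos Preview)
Your approach matches the paper's almost exactly: reorder so the vanishing pair is $\{4,5\}$, apply the freeness condition \eqref{freeBaz} with the partition $\{4,5\}\cup\{i,j\}$ to force $q_i+q_j=\pm 2$ for each pair $\{i,j\}\subset\{1,2,3\}$, enumerate the eight sign patterns, and reduce (up to permutation and global sign) to the two families $(1,1,1,n,-n)$ and $(1,1,-3,n,-n)$ --- the latter being your $(3,-1,-1,n,-n)$ after a sign flip and reordering.

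Where you and the paper diverge is in the last step. You correctly flag the identification of the two families as the crux and speculate about producing it via the $\wh U$-operation combined with a conjugation normalising the right-hand block structure. The paper does \emph{not} carry out any such construction; it simply invokes \cite{EKS}, Remark~4.2, together with the discussion in Section~1 of \cite{FZ2}, where the relevant ``hidden'' diffeomorphism between Bazaikin spaces with different weight tuples is established. So the technical heart you anticipate is outsourced to the literature rather than proved in situ, and your outline of a direct geometric argument, while plausible in spirit, is not what the paper does and would need independent verification.
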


\begin{proof}
Recall that reordering the $q_i$ is a diffeomorphism, so we may assume without loss of generality that $q_4 + q_5 = 0$.  By the freeness condition (\ref{freeBaz}) we must have $q_i + q_j = \pm 2$ for $1 \leq i < j \leq 3$.  If we examine the eight possible combinations of these expressions we find that, up to sign and reordering, the only $5$-tuples which can arise are $(1,1,1,q_4,q_5)$ and $(1,1,-3,q_4,q_5)$, with $q_4 + q_5 = 0$.  However, following \cite{EKS}, Remark 4.2, and the discussion in Section 1 of \cite{FZ2}, we know that these $5$-tuples in fact describe the same manifolds.\qed
\end{proof}

With Theorem \ref{BazQP} and Lemma \ref{Bazmnfds} in hand, and recalling the situation for Eschenburg spaces (Theorem \ref{E0almostpos}), $B^{13}_{1,1,1,n,-n}$ provide a family of natural candidates to admit a metric with almost positive curvature.  In the case of $n=1$ we can indeed exhibit this property and in so doing we establish Theorem \ref{thmA}\ref{APBaz}.  For the cases $n>1$ the problem is open.

\begin{thm}
\label{BazCoh1AP}
The Bazaikin space $B^{13}_{1, 1, 1, 1, -1}$ admits almost positive curvature.
\end{thm}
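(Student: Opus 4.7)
The approach mirrors Theorem \ref{BazQP}: I apply Lemmas \ref{Baz0curv} and \ref{Bazeqns} with $(q_1, \ldots, q_5) = (1,1,1,1,-1)$ and $q = 3$.  By Lemma \ref{Baz0curv}, a horizontal zero-curvature plane at $A \in SU(5)$ exists if and only if either $W_1$ or some $W_2 = \Ad_k \diag(2i, -3i, 2i, -3i, 2i)$, $k \in Sp(2)$, is horizontal.  The plan is to show that the set of such $A$ is exactly
\[ \Omega := \{A \in SU(5) : a_{55} = 0\}, \]
a smooth, $U$-invariant real codimension-two submanifold; its image in $B^{13}_{1,1,1,1,-1}$ will then be a codimension-two submanifold, hence of measure zero, and Theorem \ref{tapp} ensures strictly positive curvature elsewhere.

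For equation (\ref{BazEqn1}), the substitution combined with the unitarity of column $5$ of $A$ gives
\[ 3 = |a_{15}|^2 + |a_{25}|^2 + |a_{35}|^2 + |a_{45}|^2 - |a_{55}|^2 = 1 - 2|a_{55}|^2, \]
which has no solutions, so $W_1$ is never horizontal.  For equation (\ref{BazEqn2}), the same substitution together with the unitarity of columns $2$ and $4$ of $Ak$ reduces the condition to $|(Ak)_{52}|^2 + |(Ak)_{54}|^2 = 1$.  Since $Sp(2)$ sits in the upper-left $4 \x 4$ block of $SU(5)$, we have $(Ak)_{55} = a_{55}$; combined with the fact that row $5$ of $Ak$ has unit norm, this yields $|(Ak)_{51}|^2 + |(Ak)_{53}|^2 + |a_{55}|^2 = 0$, forcing $a_{55} = 0$ and $(Ak)_{51} = (Ak)_{53} = 0$.

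The step I expect to require the most care is the converse: whenever $a_{55} = 0$, I must exhibit a $k \in Sp(2)$ realising these vanishing conditions.  Set $v := (a_{51}, a_{52}, a_{53}, a_{54}) \in \C^4$, a unit vector.  The conditions $(Ak)_{51} = (Ak)_{53} = 0$ read $v^T k_1 = v^T k_3 = 0$, which in terms of the standard Hermitian inner product translate to $k_1 \perp \bar v$ and $k_3 \perp \bar v$.  Under the quaternionic structure $j_\H$ on $\C^4 \cong \H^2$ induced by $Sp(2) \hra SU(4)$, the columns of any $k \in Sp(2)$ satisfy $k_3 = j_\H k_1$ and $k_4 = j_\H k_2$, so the pair reduces to $k_1 \perp \bar v$ and $k_1 \perp j_\H \bar v$.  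Now $\{\bar v, j_\H \bar v\}$ is an orthonormal pair spanning a quaternionic line, so its Hermitian complement is again a quaternionic line; choosing any unit vector $k_1$ in this complement, setting $k_3 := j_\H k_1$, and extending to a quaternionic orthonormal basis $\{k_1, k_2, k_3, k_4\}$ produces the required $k \in Sp(2)$.

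Finally, $\Omega$ is $U$-invariant since $([A_0, z] \star A)_{55} = z^{q_5 - q} a_{55} = z^{-4} a_{55}$, and it is a smooth real-codimension-two submanifold of $SU(5)$ (the differential of $A \mapsto a_{55}$ is non-vanishing on $\Omega$ because row $5$ of $A$ is a unit vector).  Its image in $B^{13}_{1,1,1,1,-1}$ is thus a smooth codimension-two submanifold, in particular of measure zero, completing the verification that $B^{13}_{1,1,1,1,-1}$ admits almost positive curvature.
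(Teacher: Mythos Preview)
Your proof is correct and follows the same approach as the paper: reduce via Lemmas \ref{Baz0curv} and \ref{Bazeqns} to equations (\ref{BazEqn1}) and (\ref{BazEqn2}), show (\ref{BazEqn1}) has no solutions, and show (\ref{BazEqn2}) forces $a_{55}=0$, which defines a $U$-invariant measure-zero set.

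The one genuine difference is that you prove more than the paper does. The paper stops after showing that horizontal zero-curvature planes can occur only where $a_{55}=0$; since this locus is $U$-invariant and of measure zero, almost positive curvature follows immediately. You additionally establish the converse, constructing for each $A\in\Omega$ an explicit $k\in Sp(2)$ making $W_2$ horizontal, thereby identifying $\Omega$ as \emph{exactly} the set of points admitting horizontal zero-curvature planes, and you verify that $\Omega$ is a smooth codimension-two submanifold. This extra information is correct and interesting (it parallels the codimension-two statement in Theorem \ref{E0almostpos}), but it is not required for the theorem as stated.
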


\begin{proof}
Since $A$ is unitary, equation (\ref{BazEqn1}) becomes
        $$3 = \sum_{\ell=1}^4 |a_{\ell 5}|^2 - |a_{55}|^2 = 1 - 2|a_{55}|^2 < 1.$$
Therefore (\ref{BazEqn1}) has no solutions.  On the other hand, equation (\ref{BazEqn2}) becomes
        \begin{align*}
            0 &= \sum_{\ell=1}^4 (|(Ak)_{\ell 2}|^2 + |(Ak)_{\ell 4}|^2) - (|(Ak)_{5 2}|^2 + |(Ak)_{5 4}|^2) \\
            &= \sum_{\ell=1}^3 2(1 - (|(Ak)_{5 2}|^2 + |(Ak)_{5 4}|^2)).
        \end{align*}
Since $Ak$ is unitary, we know that $|(Ak)_{5 2}|^2 + |(Ak)_{5 4}|^2 \leq 1$.  Thus
        $$|(Ak)_{5 2}|^2 + |(Ak)_{5 4}|^2 = 1$$
and so $|(Ak)_{5 1}|^2 = |(Ak)_{5 3}|^2 = |(Ak)_{5 5}|^2 = 0$.  In particular $(Ak)_{55} = 0$.  But $(Ak)_{55} = a_{55} k_{55}$ because of our embedding of $\syp(2)$ in $\SU(5)$, and for the same reason $|k_{55}| = 1$.  Hence $a_{55} = 0$, and so $B^{13}_{1,1,1,1,-1}$ admits almost positive curvature since this is clearly invariant under the action of $\syp(2) \cdot S^1_{q_1, \dots, q_5}$.\qed
\end{proof}

\begin{rmk}
    As previously mentioned, one can find the argument for positive curvature on the Bazaikin spaces in \cite{Zi2} and \cite{DE}.  A proof following the modified argument used in this article may be found in \cite{Ke}.
\end{rmk}

Until recently only the integral cohomology ring of the Bazaikin spaces was known (\cite{Ba}).  Bazaikin spaces may be distinguished from one another via the order $s$ of the finite torsion groups $H^6 = H^8 = \Z_s$.  In \cite{FZ2} the authors give explicit expressions for $s$ and some other topological invariants.  In particular, the order $s$ and the first Pontrjagin class, $p_1$, are given by
        \begin{align}
        \label{cohomBaz}
            s &= \frac{1}{8} \ \Big|\sigma_3 \left(q_1, \dots, q_5, -\sum q_i \right)\Big|, \ \ {\rm and} \\
        \label{pontBaz}
            p_1 &= -\sigma_2 \left(q_1, \dots, q_5, -\sum q_i \right) \in H^4 = \Z,
        \end{align}
where $\sigma_i (a_1, \dots, a_r)$ denotes the elementary symmetric polynomial of degree $i$ in the variables $a_1, \dots, a_r$.  Note that $p_1$ is a homeomorphism invariant.  Theorem \ref{topBaz} now follows easily.

\begin{thm}
\label{Baztop}
The quasi-positively curved Bazaikin spaces $B^{13}_{1,1,1,n,-n}$, $n \geq 1$ odd, share the same cohomology ring but are pairwise homeomorphically distinct.
\end{thm}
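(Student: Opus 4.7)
The plan is essentially a direct computation, since all the heavy lifting has already been done. I need to verify four things for the family $B^{13}_{1,1,1,n,-n}$, $n\geq 1$ odd: that the $Sp(2)\cdot S^1$-action is free (so that we genuinely obtain manifolds), that quasi-positive curvature holds, that the cohomology rings are all isomorphic, and that the $p_1$-invariants are pairwise distinct.

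First I would check freeness using condition (\ref{freeBaz}). All entries of $q=(1,1,1,n,-n)$ are odd since $n$ is odd. The pairwise sums that can occur are $2$, $1\pm n$, and $0$; each of $1\pm n$ is even since $n$ is odd. Any two disjoint pair-sums therefore share the factor $2$, and a short case-by-case check (the only mildly delicate case is the pair $\{1+n,\,1-n\}$, whose $\gcd$ equals $\gcd(1+n,2)=2$) shows $\gcd=2$ in every case. Quasi-positive curvature is then immediate from Theorem \ref{BazQP}, since $q_1,q_2,q_3,q_4=1,1,1,n$ are all positive for $n\geq 1$.

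Next I would compute the topological invariants using the formulas (\ref{cohomBaz}), (\ref{pontBaz}) applied to the $6$-tuple $(1,1,1,n,-n,-3)$, since $\sum q_i=3$. I split the monomials of $\sigma_3$ by how many of the three $1$'s they contain. The contribution from three $1$'s is $1$; from exactly two $1$'s the total is $3(n+(-n)+(-3))=-9$; from exactly one $1$ it is $3(-n^2-3n+3n)=-3n^2$; and from none it is $n\cdot(-n)\cdot(-3)=3n^2$. The $n^2$ terms cancel and $\sigma_3=-8$, giving $s=1$. An entirely analogous accounting for $\sigma_2$ yields $\sigma_2=3-9-n^2=-n^2-6$, so $p_1=n^2+6$.

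The remaining step is to conclude. Since $s=1$ for every odd $n\geq 1$, the torsion groups $H^6=H^8=\Z_s$ vanish uniformly and, in view of the complete description of the integral cohomology ring of a Bazaikin space recalled above from \cite{Ba} and \cite{FZ2}, the rings are all isomorphic. On the other hand $p_1=n^2+6$ takes a different value for each odd $n\geq 1$, and since $p_1$ is a homeomorphism invariant this forces the spaces to be pairwise homeomorphically distinct. The main obstacle is not any single step but the need to track signs correctly in the two symmetric-polynomial computations; the cancellation of the $n^2$-terms in $\sigma_3$ (which is what makes the cohomology ring constant across the family) is the delicate point worth double-checking.
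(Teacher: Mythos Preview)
Your proposal is correct and follows exactly the same approach as the paper's proof, which simply cites (\ref{cohomBaz}) and (\ref{pontBaz}) and states that a direct computation gives $s=1$ and $p_1=6+n^2$. You spell out the symmetric-polynomial bookkeeping in more detail than the paper does, and you also reverify freeness and quasi-positive curvature, which the paper omits here since they were already secured by Lemma~\ref{Bazmnfds} and Theorem~\ref{BazQP}.
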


\begin{proof}
Given (\ref{cohomBaz}) it is a simple exercise to compute that $s=1$ for each of the manifolds $B^{13}_{1,1,1,n,-n}$, whereas $p_1 = 6 + n^2$ by (\ref{pontBaz}).\qed
\end{proof}

\section{Torus quotients of $S^3 \x S^3$}
\label{4orbs}


Wilking, \cite{Wi}, has shown that a particular circle action on $S^3 \x S^3$ induces almost positive curvature on $S^3 \x S^2$.  This, together with the description in \cite{To} of $\C P^2 \# \C P^2$ as a biquotient $(S^3 \x S^ 3) \bq T^2$, suggests that it may be beneficial to study $T^2$ actions on $S^3 \x S^3$.  We are, of course, interested in finding new examples of biquotients with almost and quasi-positive curvature.  Recall that a bi-invariant metric on $S^3 \x S^3$ is simply a product of bi-invariant metrics on each factor.  Suppose we use a Cheeger deformation from the bi-invariant metric to equip $S^3 \x S^3$ with a left-invariant metric which is right-invariant under our $T^2$ action.  If we allow such isometric torus actions to be arbitrary on the right-hand side of $S^3 \x S^3$ then, since $\Im \H$ is $3$-dimensional, at every point of $S^3 \x S^ 3$ we will be able to obtain a horizontal zero-curvature plane of the form $\Span\{(v,0),(0,w) \ | \ v,w \in \Im \H \}$, which hence will project to a zero-curvature plane in $(S^3 \x S^ 3) \bq T^2$.  Therefore we shall restrict our attention to a special subfamily of torus actions which act arbitrarily on the left, but diagonally on the right of $S^3 \x S^3$.

Let $G=S^3 \times S^3$.  As we are interested in biquotient actions, we need to consider homomorphisms
        $$f :\  T^2 \!\!\! \lra T^2 \subset T^2 \x T^2 \subset G \x G$$
such that $f(T^2)$ is diagonal in the second factor, i.e. the projection onto the second factor is either trivial or one-dimensional.  Hence all tori $f(T^2)$ must have either one or two-dimensional projections onto the first factor.  If we perform the appropriate reparametrizations we may thus assume without loss of generality and up to a reordering of factors that the torus $f(T^2) \subset G \x G$ has one of the forms
        \begin{align}
            \label{zwTorus} U_L &:= \left\{ \left( \twoonem{z}{w}, \twoonem{1}{1} \right) \ \Big| \ z,w \in S^1 \right\}; \ \ \ {\rm or}\\
            \label{abTorus} U_{a,b} &:= \left\{ \left(\twoonem{z}{w}, \twoonem{z^a w^b}{z^a w^b} \right) \ \Big| \ z,w \in S^1 \right\}, \ \ \ a,b \in \Z; \ \ \ {\rm or}\\
            \label{cTorus} U_c &:= \left\{ \left(\twoonem{z}{z^c}, \twoonem{w}{w} \right) \ \Big| \ z,w \in S^1 \right\}, \ \ \ c \in \Z.
        \end{align}
It is clear that $U_L$ acts effectively and freely on $G$.  We are interested in determining when the other actions are free.

Consider $\H = \C + \C j$ and recall that $j z = \bar z j$ for all $z \in \C$.  Therefore, given some $q \in S^3 \subset \H$,
        \begin{align}
        \nonumber
             &{}  z^k w^\ell q \bar z^m \bar w^n = q   \\
        \nonumber
             &\iff  (z^k w^\ell) 1 (\bar z^m \bar w^n) = 1 \ \
               {\rm and }     \ \ (z^k w^\ell) j (\bar z^m \bar w^n) = j\\
        \label{eq1}
             &\iff  z^{k-m} w^{\ell-n} = 1 \ \  {\rm and }  \ \ z^{k+m} w^{\ell+n} = 1.
        \end{align}

It is a simple exercise using the equations in (\ref{eq1}) to show that $U_c$ and $U_{a,b}$ act effectively on $G$ when $c$ and $a + b$ respectively are even, while in the event that either $c$ or $a + b$ is odd there is an ineffective kernel $\Delta \Z_2 := \{\pm(1,1)\}$ for the respective action.

Moreover, one can easily check that the only points which can possibly be fixed by the actions of $U_c$ or $U_{a,b}$ (modulo any ineffective kernel) lie on the orbits of the points $(1,1)$, $(1,j)$, $(j,1)$ and $(j,j)$.  Therefore we need only examine these points in order to determine when the actions are free.

\begin{prop}
\label{freeT2}
Up to a change of coordinates or reordering of factors, the only free $T^2$ actions on $S^3 \times S^3$ which are diagonal on the right are given by $U_L = U_{a,b}$, $a=b=0$, and $U_c$, $c=0$.  The actions are, respectively,
        \begin{align*}
            (z,w) \star \twoonem{q_1}{q_2} &=  \twoonem{z q_1}{w q_2}, \ z,w \in S^1, q_1, q_2 \in S^3; \ \ {\rm and} \\
            (z,w) \star \twoonem{q_1}{q_2} &= \twoonem{z q_1 \bar w}{q_2 \bar w}, \ z,w \in S^1, q_1, q_2 \in S^3.
        \end{align*}
The resulting manifolds are both diffeomorphic to $S^2 \times S^2$.
\end{prop}

\begin{proof}
As the arguments are analogous, we consider only the action of $U_{a,b}$.  The $U_c$ case is left to the reader.  For $a+b$ even the equations in (\ref{eq1}) yield
        \begin{align*}
            (1,1) \ \ {\rm fixed} &\iff  z = w \ \ {\rm and} \ \ z^{1-a-b} = 1 ; \\
            (1,j) \ \ {\rm fixed} &\iff  z = \bar w \ \ {\rm and} \ \ z^{1-a+b} = 1; \\
            (j,1) \ \ {\rm fixed} &\iff  z = \bar w \ \ {\rm and} \ \ z^{1+a-b} = 1; \\
            (j,j) \ \ {\rm fixed} &\iff  z = w \ \ {\rm and} \ \ z^{1+a+b} = 1.
        \end{align*}
Thus we see that the action is free (namely $z=w=1$ in each case) if and only if $1 \pm a \pm b = \pm 1$.  But $a+b$ is even, hence $\pm a \pm b$ is even, and so $a=b=0$ is the only situation in which we can obtain a free action.

Suppose now that $a+b$ is odd.  The existence of a $\Delta \Z_2$ ineffective kernel implies that the action is free (namely $z = w = \pm 1$ in each case) if and only if $1 \pm a \pm b = \pm 2$.  It is a simple exercise to check that there are no values of $a$ and $b$ which satisfy all four equations simultaneously.  Hence we will always have a fixed point and so the action of $U_{a,b}$, $a+b$ odd, is never free.

The fact that the quotients under the free actions are diffeomorphic to $S^2 \x S^2$ follows from computing the cohomology ring.  Four dimensional manifolds with non-negative curvature that admit an effective, isometric circle action were classified in \cite{Kl} (see also \cite{SY}).  Only the manifolds $S^4$, $\C P^2$, $\C P^2 \# \pm \C P^2$ and $S^2 \x S^2$ can arise.  These manifolds are clearly distinguished by their cohomology rings.  To compute the cohomology of the biquotients under consideration one can follow the process described in \cite{Ke2} (see also \cite{dV}).\qed
\end{proof}

For those actions which are not free we may consider the equations obtained in the proof of Proposition \ref{freeT2} in order to write down explicitly the isotropy groups $\Gamma_{(q_1,q_2)}$ of singular points, which we recall can only be the $T^2$-orbits of the points $(q_1,q_2) = (1,1), (1,j), (j,1), (j,j) \in S^3 \x S^3$.  The isotropy groups for each action (modulo any ineffective kernel) are collected in Table \ref{isotropygroups}.  By considering the groups in this table we can easily find examples which have only one or two singular points and small isotropy groups at these points.  In the event that they arise, $\Z_0$ and $\Z_1$ denote $S^1$ and $\{1\}$ respectively.  We include some examples in Table \ref{specialisotropy}.

\begin{table}[!hbtp]
\begin{center}
\begin{tabular}{|l|c||c|c|c|c|}
    \cline{3-6} \multicolumn{2}{c||}{} & \multicolumn{4}{|c|}{$\Gamma_{(q_1,q_2)}$ at:}\\[.1cm] \cline{3-6}
    \multicolumn{2}{c||}{} & $(1,1)$ & $(1,j)$ & $(j,1)$ & $(j,j)$ \\[.1cm]
    \hline \hline
    \multirow{2}{*}{$U_{a,b}$} & $a+b$ even & $\Z_{|1-a-b|}$ & $\Z_{|1-a+b|}$ & $\Z_{|1+a-b|}$ & $\Z_{|1+a+b|}$\\[.1cm]
    \cline{2-6}
                           & $a+b$ odd & $\Z_{\frac{1}{2}|1-a-b|}$ & $\Z_{\frac{1}{2}|1-a+b|}$ & $\Z_{\frac{1}{2}|1+a-b|}$ & $\Z_{\frac{1}{2}|1+a+b|}$\\[.1cm]
    \hline
    \multirow{2}{*}{$U_c$} & $c$ even & $\Z_{|c-1|}$ & $\Z_{|c+1|}$ & $\Z_{|c+1|}$ & $\Z_{|c-1|}$\\[.1cm]
    \cline{2-6}
                           & $c$ odd & $\Z_{\frac{1}{2}|c-1|}$ & $\Z_{\frac{1}{2}|c+1|}$ & $\Z_{\frac{1}{2}|c+1|}$ & $\Z_{\frac{1}{2}|c-1|}$\\[.1cm]
    \hline
\end{tabular}
\end{center}
\caption{Isotropy groups of the $T^2$ actions $U_{a,b}$ and $U_c$} \label{isotropygroups}
\end{table}

\begin{table}[!hbtp]
\begin{center}
\begin{tabular}{|l||c|c|c|c|} \cline{2-5}
    \multicolumn{1}{c||}{} & \multicolumn{4}{|c|}{$\Gamma_{(q_1,q_2)}$ at:}\\ \cline{2-5}
    \multicolumn{1}{c||}{} & $(1,1)$ & $(1,j)$ & $(j,1)$ & $(j,j)$\\
    \hline \hline
                     $U_{1,1}$ & $\{1\}$ & $\{1\}$ & $\{1\}$ & $\Z_3$ \\ \hline
                     $U_{3,0}$ & $\{1\}$ & $\{1\}$ & $\Z_2$ & $\Z_2$ \\ \hline
                     $U_2$ & $\{1\}$ & $\Z_3$ & $\Z_3$ & $\{1\}$\\ \hline
                     $U_3$ & $\{1\}$ & $\Z_2$ & $\Z_2$ & $\{1\}$\\ \hline
\end{tabular}
\end{center}
\caption{Some special cases of the actions $U_{a,b}$ and $U_c$} \label{specialisotropy}
\end{table}


We turn now to the curvature computations.  Consider the subgroup $K=\Delta S^3 \subset G=S^3 \times S^3$, and let $\<\ ,\>_0$ be the bi-invariant product metric on $G$.  Then $\g = \k \oplus \p$, where $\p$ is the orthogonal complement to $\k$ with respect to $\<\ ,\>_0$.  Notice that $(G,K)$ is a rank one symmetric pair.  We define a new left-invariant, right $K$-invariant metric $\met_1$ on $G$ as in (\ref{met1}), namely
        $$\<X,Y\>_1 = \<X,\Phi(Y)\>_0,$$
where $\Phi(Y) = Y_\p + \lambda Y_\k$, $\lambda \in (0,1)$.  By Lemma \ref{Eschlem} we know that a plane
$\sigma = \Span\{\Phi^{-1}(X),\Phi^{-1}(Y)\} \subset \g$ has zero-curvature with respect to $\<\ ,\>_1$ if and only if
        $$0=[X,Y]=[X_\p,Y_\p]=[X_\k,Y_\k].$$
Hence, for $G$ and $K$ as above, a zero-curvature plane must be of the form
    \beq
    \label{0curv}
        \sigma = \Span\{\Phi^{-1}(v,0), \Phi^{-1}(0,v) \ | \ v \in \Im \H\}.
    \eeq
Since we are considering $T^2$ actions which are diagonal on the right of $G$, it is clear that the actions are by isometries and hence induce a metric on $G \bq T^2$.
\begin{thm}
\label{almostposcurv}
    $(G, \<\ ,\>_1) \bq T^2$ has almost positive curvature if and only if the action is not free.
\end{thm}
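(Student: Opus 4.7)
\emph{Proof plan.} First, I would translate the problem into a condition on $v \in \Im \H$: by (\ref{0curv}) every zero-curvature plane in $(G, \met_1)$ has the form $\sigma_v := \Span\{\Phi^{-1}(v, 0), \Phi^{-1}(0, v)\}$ for some $v \in \Im \H$, and Theorem \ref{tapp} guarantees that $\sigma_v$ descends to a zero-curvature plane in the quotient precisely when it is horizontal at the chosen lift. The plan is therefore to decide, for each of the tori $U_L$, $U_{a,b}$, $U_c$ listed in the previous discussion, which points $g = (q_1, q_2) \in G$ admit a horizontal $\sigma_v$.

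Next I would reduce horizontality to $\met_0$-orthogonality. Since $\Phi$ is self-adjoint with respect to $\met_0$, the identity $\<\Phi^{-1}(X), Y\>_1 = \<X, Y\>_0$ shows that $\sigma_v$ is horizontal at $g$ if and only if both $(v, 0)$ and $(0, v)$ are $\met_0$-orthogonal to $\V_g$. Writing the two generators of $\V_g$ via $\Ad_{g^{-1}} X^L - X^R$ and simplifying, a short calculation will collapse the horizontality conditions, in every non-free case (i.e.\ $U_{a,b}$ with $(a, b) \neq (0, 0)$, or $U_c$ with $c \neq 0$), to
\[
    v \perp \{\, i, \; \bar q_1 i q_1, \; \bar q_2 i q_2 \,\} \ \ \text{in } \Im \H,
\]
whereas for the two free actions $U_L = U_{0,0}$ and $U_0$ one of these three orthogonality constraints is absent.

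Given this, the ``only if'' direction is immediate: in either free case only two linear conditions are imposed on the $3$-dimensional space $\Im \H$, so a nonzero $v$ (and hence a horizontal zero-curvature plane) exists at every $g \in G$, whence every point of $S^2 \x S^2$ (resp.\ $\C P^2 \# \overline{\C P^2}$) carries a zero-curvature plane and almost positive curvature fails. For the ``if'' direction, I would check that
\[
    \Omega := \{\, g = (q_1, q_2) \in G : \{i, \bar q_1 i q_1, \bar q_2 i q_2\} \text{ is linearly dependent in } \Im \H \,\}
\]
is a proper closed $T^2$-invariant subvariety of $G$; non-emptiness of the complement is exhibited by any explicit spanning triple (e.g.\ $q_1 = \tfrac{1}{\sqrt 2}(1 + j)$, $q_2 = \tfrac{1}{\sqrt 2}(1 + k)$ gives $\{i, k, -j\}$).

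The main step requiring care is the $T^2$-invariance of $\Omega$. Under right-translation by the diagonal $u_2 = (u, u)$ in the torus, each $\bar q_i i q_i$ is conjugated by $u$, and conjugating the resulting triple $\{i, u \bar q_1 i q_1 \bar u, u \bar q_2 i q_2 \bar u\}$ by $\bar u$ returns it to $\{\bar u i u, \bar q_1 i q_1, \bar q_2 i q_2\}$. Since $u$ lies in the $i$-circle $S^1 \subset S^3$ and hence commutes with $i$, one has $\bar u i u = i$, so linear dependence is preserved. Consequently $\Omega$ descends to a closed measure-zero subset of the orbifold, outside of which the induced metric is positively curved, completing the proof.
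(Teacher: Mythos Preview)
Your proof is correct and follows essentially the same route as the paper: the paper also reduces horizontality of $\sigma_v$ at $(q_1,q_2)$ to the three conditions $v \perp i,\ \Ad_{\bar q_1} i,\ \Ad_{\bar q_2} i$ (deriving them separately for $U_{a,b}$ and $U_c$ rather than uniformly), observes that one condition drops out exactly in the free cases, and shows that the linear-dependence locus $\Omega$ is a $T^2$-invariant hypersurface via the determinant~(\ref{detcond}). The only small omission is that your invariance check treats only the diagonal right-translation; you should also note (as is implicit in the paper's computation) that left-translation by $(z,w)\in S^1\times S^1$ leaves each $\bar q_i i q_i$ unchanged since $z,w$ commute with $i$, which is immediate.
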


\begin{proof}
By O'Neill's formula it is sufficient to show that points in $G$ with horizontal zero-curvature planes lie on a hypersurface.  Recall that the existence of an ineffective kernel will have no impact on our curvature computations.  We therefore need only consider torus actions of the form
        \begin{align*}
            U_{a,b} &= \left\{ \left(\twoonem{z}{w}, \twoonem{z^a w^b}{z^a w^b} \right) \ \Big| \ z,w \in S^1 \right\}, \ \ \ a,b \in \Z;\\
            U_c &= \left\{ \left(\twoonem{z}{z^c}, \twoonem{w}{w} \right) \ \Big| \ z,w \in S^1 \right\}, \ \ \ c \in \Z,
        \end{align*}
and notice that the $U_L$ action of (\ref{zwTorus}) is the special case $(a,b)=(0,0)$ of $U_{a,b}$.

Consider first the action by $U_{a,b}$.  The vertical subspace at $(q_1,q_2)$, left translated to $(1,1)$, is given by
        $$\mathcal{V}_{(q_1,q_2)} = \left\{\frac{1}{2} \twoonem{s \Ad_{\bar q_1} i -(a \: s + b \: t) i}{t \Ad_{\bar q_2} i -(a \: s + b \: t)  i}
            \ \Bigg| \ s, t \in \R\right\}.$$
Thus the horizontal subspace with respect to $\<\ , \>_1$ is
        $$\mathcal{H}_{(q_1,q_2)} = \left\{\Phi^{-1}(v,w) \ \Bigg| \ \begin{matrix} \Ad_{q_1} v - a(v+w) \perp i\\
                                                                     \Ad_{q_2} w - b(v+w)\perp i \end{matrix} \right\}.$$
Hence, by equation (\ref{0curv}), a zero-curvature plane
        $$\sigma = \Span\{\Phi^{-1}(v,0), \Phi^{-1}(0,v)\}$$
is horizontal if and only if
            $$\Ad_{q_1} v - a v \perp i,\ \
            a v \perp i,\ \
            \Ad_{q_2} v - b v \perp i \ \ {\rm and } \ \
            b v \perp i.$$
We want to show that $v, \Ad_{q_1} v, \Ad_{q_2} v \perp i$ since this is equivalent to $v \perp i, \Ad_{\bar q_1} i$, $\Ad_{\bar q_2} i$.  This will imply that $v=0$ unless $i, \Ad_{\bar q_1} i$, and $\Ad_{\bar q_2} i$ are linearly dependent, which in turn would imply positive curvature at the point $[(q_1,q_2)] \in G \bq U_{a,b}$.  It is clear that this situation arises if and only if $(a,b) \neq (0,0)$, i.e. if and only if the action of $U_{a,b}$ is not free.  Suppose $(a,b) \neq 0$.  Then $i, \Ad_{\bar q_1} i$, and $\Ad_{\bar q_2} i$ are linearly dependent if and only if
        \beq
        \label{detcond}
            \det \twom{\<\Ad_{\bar q_1} i, j\>}{\<\Ad_{\bar q_1} i, k\>}{\<\Ad_{\bar q_2} i, j\>}{\<\Ad_{\bar q_2} i, k\>} = 0,
        \eeq
which defines a hypersurface in $G$.  Note that equation (\ref{detcond}) is invariant under the action of $U_{a,b}$ since
$\Ad_{z^k w^\ell q \bar z^m \bar w^n} i = \Ad_{z^k w^\ell q} i$ and $\<\Ad_q i, j\> = 2\Re(\bar u v i)$, $\<\Ad_q i, k\> = 2\Re(\bar u v)$,
for $z,w \in S^1$, $q = u + vj \in S^3, u,v \in \C$.  Thus we have a hypersurface in $G \bq U_{a,b}$ defined by (\ref{detcond}) on which points with horizontal zero-curvature planes must lie.

We now turn our attention to the action by $U_c$.  The vertical subspace at $(q_1,q_2)$, left translated to $(1,1)$, is given by
        $$\mathcal{V}_{(q_1,q_2)} = \left\{\frac{1}{2} \twoonem{s \Ad_{\bar q_1} i - t \: i}{c \: s \Ad_{\bar q_2} i - t \:  i}
            \ \Bigg| \ s, t \in \R\right\}.$$
Thus the horizontal subspace with respect to $\<\ , \>_1$ is
        $$\mathcal{H}_{(q_1,q_2)} = \left\{\Phi^{-1}(v,w) \ \Bigg| \ \begin{matrix} \Ad_{q_1} v + c \Ad_{q_2} w \perp i\\
                                                                    v+w \perp i \end{matrix} \right\}.$$
Hence, by (\ref{0curv}), a zero-curvature plane $\sigma = \Span\{\Phi^{-1}(v,0), \Phi^{-1}(0,v)\}$ is horizontal if and only if
        $$\Ad_{q_1} v \perp i,\quad
            c \Ad_{q_2} v \perp i \ \ {\rm and} \ \
            v \perp i.$$
It is clear that the only situation in which we do not get $v, \Ad_{q_1} v, \Ad_{q_2} v \perp i$ is when $c = 0$, i.e. when the action is free.  In all other situations we have almost positive curvature by the same argument as for $U_{a,b}$.\qed
\end{proof}

\begin{rmk}
In each case there is a horizontal zero-curvature plane at $(q_1,q_2)$ if one of the following holds:
\begin{enumerate}
    \item $\Ad_{\bar q_1} i = \pm i, \textrm{ i.e.} \ \ q_1 \in \C \ {\rm or} \ \C j$
    \item $\Ad_{\bar q_2} i = \pm i, \textrm{ i.e.} \ \ q_2 \in \C \ {\rm or} \ \C j$
    \item $\Ad_{\bar q_1} i = \pm \Ad_{\bar q_2} i, \textrm{ i.e.} \ \ q_1 \perp q_2, i q_2 \ {\rm or} \ q_1 \perp j q_2, k q_2$.
\end{enumerate}
Thus we will always have a zero-curvature plane at the singular points when the action is not free.  Moreover, in the free cases we have a zero-curvature plane at every point.  More precisely:
\begin{itemize}
    \item The action $U_{a,b}$ with $a=b=0$ yields $\Ad_{\bar q_1} i, \Ad_{\bar q_2} i \perp v$, which implies that there is a unique horizontal zero-curvature plane when $\Ad_{\bar q_1} i$ and $ \Ad_{\bar q_2} i$ are linearly independent, and there is an $S^1$ worth of zero-curvature planes when $\Ad_{\bar q_1} i = \pm \Ad_{\bar q_2} i$, i.e. when $q_1 \perp q_2, i q_2 \ {\rm or} \ q_1 \perp j q_2, k q_2$.
    \item The action $U_c$ with $c=0$ yields $i, \Ad_{\bar q_1} i \perp v$, which implies that there is a unique horizontal zero-curvature plane when $q_1 \not\in \C \ {\rm or} \ \C j$, and there is an $S^1$ worth of zero-curvature planes when $q_1 \in \C \ {\rm or} \ \C j$.
\end{itemize}
\end{rmk}

\begin{rmk} It is not difficult to show that on each of the orbifolds above there is, up to reparametrization, a unique non-trivial isometric circle action.  The image of the loci of points admitting a zero-curvature plane in the corresponding $S^1$-orbit space is topologically a two-dimensional sphere.
\end{rmk}

\begin{ack}
The majority of this work was completed as part of the author's Ph.D. thesis at the University of Pennsylvania.  It is with great pleasure that thanks are extended to Wolfgang Ziller for his patience, support, friendship and guidance, and to Dan Jane for reading a preliminary version of this article and providing numerous comments.
\end{ack}


\begin{thebibliography}{999}


\bibitem{AW} S.\ Aloff, N.\ Wallach, \emph{An infinite family of $7$-manifolds admitting positively curved Riemannian structures}, Bull. Amer. Math. Soc. {\bf 81} (1975), 93-97

\bibitem{Ba} Y.\ Bazaikin, \emph{On a certain family of closed $13$-dimensional Riemannian manifolds of positive curvature}, Sib. Math. J. {\bf 37} No. 6 (1996), 1219-1237

\bibitem{Ber} M.\ Berger, \emph{Les vari\'{e}t\'{e}s riemanniennes homog\`{e}nes normales simplement connexes \`{a} courbure strictement positive}, Ann. Scuola Norm. Sup. Pisa {\bf 15} (1961), 179-246

\bibitem{D} O.\ Dearricott, \emph{A $7$-manifold with positive curvature}, preprint 2008

\bibitem{DE} O.\ Dearricott and J.-H.\ Eschenburg, \emph{Totally geodesic embeddings of $7$-manifolds in positively curved $13$-manifolds}, Manuscripta Math. {\bf 114} (2004), 447-456

\bibitem{dV} J.\ DeVito, \emph{The classification of simply connected biquotients in dimension at most $7$}, Ph.D. thesis, University of Pennsylvania, in preparation

\bibitem{E1} J.-H.\ Eschenburg, {\em Freie isometrische Aktionen auf kompakten Liegruppen mit positiv gekr\"ummten Orbitr\"aumen}, Schriftenreihe Math.\ Inst.\ Univ.\ M\"unster (2) 32 (1984)\

\bibitem{E2} J.-H.\ Eschenburg, \emph{Cohomology of biquotients}, Manuscripta Math. {\bf 75} (1992), 151-166

\bibitem{EKS} J.-H.\ Eschenburg, A.\ Kollross and K.\ Shankar, \emph{Free, isometric circle actions on compact symmetric spaces}, Geom. Dedicata {\bf 102} (2003), 35-44

\bibitem{EK} J.-H.\ Eschenburg and M.\ Kerin, \emph{Almost positive curvature on the Gromoll-Meyer 7-sphere}, Proc. Amer. Math. Soc. {\bf 136}(9) (2008), 3263-3270

\bibitem{FZ1} L.\ Florit and W.\ Ziller, \emph{Orbifold fibrations of Eschenburg spaces}, Geom. Dedicata {\bf 127} (2007), 159 - 175

\bibitem{FZ2} L.\ Florit and W.\ Ziller, \emph{On the topology of positively curved Bazaikin spaces}, J. Europ. Math. Soc. {\bf 11} (2009), 473 - 488

\bibitem{GM} D.\ Gromoll and W.T.\ Meyer, \emph{An exotic sphere with nonnegative sectional curvature}, Ann.\ of Math. {\bf 100} (1974), 401 - 406

\bibitem{GSZ} K.\ Grove, K.\ Shankar and W.\ Ziller, \emph{Symmetries of Eschenburg spaces and the Chern problem}, Asian J. Math. {\bf 10} (2006) No. 3, 647-662

\bibitem{GVZ} K.\ Grove, L.\ Verdiani and W.\ Ziller, \emph{An exotic $T_1 S^4$ with positive curvature}, preprint 2008

\bibitem{GZ} K.\ Grove and W.\ Ziller, \emph{Curvature and symmetry of Milnor spheres}, Ann. of Math. {\bf 152} (2000), 331-367

\bibitem{Ke} M.\ Kerin, \emph{Biquotients with almost positive curvature}, Ph.D. thesis, University of Pennsylvania, 2008

\bibitem{Ke2} M.\ Kerin, \emph{Some new examples with almost positive curvature}, Geom. and Top., to appear

\bibitem{Kl} B.\ Kleiner, \emph{Riemannian four-manifolds with nonnegative curvature and continuous symmetry}, Ph.D. thesis, University of California, Berkeley, 1990

\bibitem{PW1} P.\ Petersen and F.\ Wilhelm, \emph{Examples of Riemannian manifolds with positive curvature almost everywhere}, Geom. and Top. {\bf 3} (1999), 331-367

\bibitem{PW2} P.\ Petersen and F.\ Wilhelm, \emph{An exotic sphere with positive sectional curvature}, preprint 2008

\bibitem{SY} C.\ Searle and D.\ Yang, \emph{On the topology of non-negatively curved simply connected $4$-manifolds with continuous symmetry}, Duke Math. J. {\bf 74} (1994), no. 2, 547-556

\bibitem{Ta1} K.\ Tapp, \emph{Quasi-positive curvature on homogeneous bundles}, J. Diff. Geom. {\bf 65} (2003), 273-287

\bibitem{Ta2} K.\ Tapp, \emph{Flats in Riemannian submersions from Lie groups}, Asian J. Math. {\bf 13} (2009), No. 4, 459 - 464

\bibitem{To} B.\ Totaro, \emph{Cheeger Manifolds and the Classification of Biquotients}, J. Diff. Geom. {\bf 61} (2002), 397-451

\bibitem{Wa} N.\ Wallach, \emph{Compact homogeneous Riemannian manifolds with strictly positive curvature}, Ann. of Math. {\bf 96} (1972), 277-295

\bibitem{W} F.\ Wilhelm, \emph{An exotic sphere with positive curvature almost everywhere}, J. Geom. Anal. {\bf 11} (2001), 519 - 560

\bibitem{Wi} B.\ Wilking, \emph{Manifolds with positive sectional curvature almost everywhere}, Invent. Math. {\bf 148} (2002), 117-141

\bibitem{Zi1} W.\ Ziller, \emph{Examples of Riemannian manifolds with non-negative sectional curvature}, Metric and Comparison Geometry, Surv. Diff. Geom. 11, ed. K.Grove and J.Cheeger, International Press, 2007

\bibitem{Zi2} W.\ Ziller, \emph{Homogeneous spaces, biquotients, and manifolds with positive curvature}, Lecture Notes 1998, unpublished

\end{thebibliography}
\end{document}